\documentclass[11pt]{article}

\usepackage[letterpaper,margin=1in]{geometry}
\usepackage{amsmath,amssymb,amsthm,mathtools}
\usepackage{enumitem}
\usepackage{microtype}
\usepackage{cite}
\usepackage{xcolor}
\usepackage[hidelinks]{hyperref}

\allowdisplaybreaks
\setlist[enumerate]{leftmargin=2.3em,itemsep=2pt,topsep=4pt}

\newtheorem{theorem}{Theorem}[section]
\newtheorem{proposition}[theorem]{Proposition}
\newtheorem{lemma}[theorem]{Lemma}
\newtheorem{corollary}[theorem]{Corollary}
\theoremstyle{definition}
\newtheorem{definition}[theorem]{Definition}
\theoremstyle{remark}

\newcommand{\R}{\mathbb{R}}
\newcommand{\cl}{\operatorname{cl}}
\newcommand{\dom}{\operatorname{dom}}
\newcommand{\dist}{\operatorname{dist}}
\newcommand{\range}{\operatorname{range}}

\newcommand{\supp}{\operatorname{supp}}
\newcommand{\Diag}{\operatorname{Diag}}

\newcommand{\ri}{\operatorname{ri}}
\newcommand{\cir}{\operatorname{cir}}
\newcommand{\1}{\mathbf{1}}
\newcommand{\dd}{\,\mathrm{d}}
\newcommand{\ip}[2]{\left\langle #1,#2\right\rangle}
\newcommand{\norm}[1]{\left\lVert #1\right\rVert}
\newcommand{\abs}[1]{\left\lvert #1\right\rvert}
\newcommand{\cC}{\mathcal{C}}
\newcommand{\cN}{\mathcal{N}}
\newcommand{\transpose}{\mathsf{T}}

\title{Establishing Boundary KKT Convergence of Mirror Descent
through Reparameterization}
\author{
Kuangyu Ding\thanks{School of Industrial Engineering, Purdue University,
West Lafayette, IN 47907, USA. Email: \nolinkurl{kuangyud@u.nus.edu}.
The work of Ding was supported by the Office of Naval Research (ONR) under Grant N00014-24-1-2751.}
\and
Kim-Chuan Toh\thanks{Department of Mathematics, National University of Singapore, Singapore
119076. Email: \nolinkurl{mattohkc@nus.edu.sg}.
The research of this author is supported by the Ministry of
Education, Singapore, under its Academic Research Fund Tier 2 grant MOE-T2EP20224-0017.
}
}
\date{\today}

\begin{document}
\maketitle

\begin{abstract}
Sequence convergence to a boundary Karush--Kuhn--Tucker (KKT) point has long
remained unclear for nonconvex mirror descent with Legendre kernels.  The difficulty arises from the blow-up of the gradient of the Legendre kernel at the boundary.  Recent work~\cite{dingtoh2026nonkkt} shows that mirror descent can accumulate at
non-KKT boundary points despite decreasing objective values, precluding a
convergence guarantee to KKT points in general. Despite this negative result, mirror descent remains effective in many
real applications.  Motivated by this contrast, we address the boundary
difficulty directly and establish KKT convergence of mirror descent for a broad
class of structured nonconvex problems.  We analyze mirror descent in
reparameterized variables, where the Hessian metric is flattened and remains
nondegenerate as the boundary is approached.  Under extension and definability conditions jointly
coupling the objective, the Legendre kernel, and the feasible region, the
reparameterized sequence has finite length and converges, thereby recovering
convergence to a KKT point of the original sequence.
Our general framework applies to some concrete instances: Shannon entropy, Fermi--Dirac entropy, and power kernels on polyhedron.
\end{abstract}

\noindent\textbf{Keywords.}
Mirror descent, mirror flow, Kurdyka--\L{}ojasiewicz inequality, circuit
decomposition, reparameterization, KKT stationarity.

\section{Introduction}

We consider the following smooth nonconvex problem with linear
constraints:
\begin{equation}\label{eq:problem}
 \min_{x\in X} f(x),
 \qquad
 X:=\overline C\cap L,
 \qquad
 L:=\{x\in\R^n:Ax=b\},
\end{equation}
where \(C:=\prod_{i=1}^n I_i\) with
each \(I_i=(\ell_i,u_i)\) being a possibly unbounded open
interval, and {$\overline{C}$ denotes the closure of $C$.}  Here \(A\in\R^{m\times n}\) has full row rank
and \(b\in\R^m\).  We assume that
\(X^\circ:=C\cap L\neq\varnothing\), so \(X^\circ=\ri X\).  The objective \(f\) is \(C^1\) on a neighborhood
of \(X\).  By the sum rule of normal cones,
\(N_X(x)=N_{\overline C}(x)+N_L(x)\) for every \(x\in X\).
Accordingly, \(x_\star\in X\) is a KKT point of \eqref{eq:problem} if and only if
there exists \(\lambda_\star\in\R^m\) such that
\begin{equation}\label{eq:KKT}
 0\in\nabla f(x_\star)+A^\transpose\lambda_\star+N_{\overline C}(x_\star).
\end{equation}
The vector \(\nabla f(x)+A^\transpose\lambda\) is the
\emph{Lagrangian gradient} associated with \(\lambda\). Consider mirror descent generated by a kernel from the following standard
class.

\begin{definition}[Legendre kernel {\cite[Section~26]{rockafellar1970convex}}]
\label{def:legendre-kernel}
A proper closed convex function \(\phi:\R^n\to(-\infty,+\infty]\) is essentially smooth if \(\operatorname{int}(\operatorname{dom}\phi)\neq\varnothing\), it is differentiable throughout \(\operatorname{int}(\operatorname{dom}\phi)\), and \(\norm{\nabla\phi(x^\nu)}\to\infty\) for every sequence \(x^\nu\in\operatorname{int}(\operatorname{dom}\phi)\) converging to a point on the boundary of $\operatorname{dom}\phi$.
It is essentially strictly convex if it is strictly convex on every convex subset of \(\operatorname{dom}\partial\phi\), where $\partial \phi$ denotes the subdifferential of $\phi$.  We call \(\phi\) a Legendre kernel on \(C\) if it has both properties and \(\operatorname{int}(\operatorname{dom}\phi)=C\) and \(\operatorname{cl}(\operatorname{dom}\phi)=\overline C\).
\end{definition}
In this work, we focus on Legendre kernels on \(C\) of the separable
form \(\phi(x):=\sum_{i=1}^n h_i(x_i)\).
Here each \(h_i\) is a one-dimensional Legendre kernel on \(I_i\).
\begin{definition}[Bregman divergence]
For \(x\in C\) and \(y\in\dom\phi\), the Bregman divergence generated by
\(\phi\) is
\[
 D_\phi(y,x):=\phi(y)-\phi(x)-\ip{\nabla\phi(x)}{y-x}.
\]
\end{definition}

Starting from the interior point
\(x_0\in X^\circ\), mirror descent generates
\begin{equation}
 x_{k+1}=T_{\alpha_k}(x_k):=\arg\min_{y\in L}
 \left\{
  \ip{\nabla f(x_k)}{y-x_k}+\frac1{\alpha_k}D_\phi(y,x_k)
 \right\},
 \qquad \alpha_k>0,
 \label{eq:intro-md}
\end{equation}
where the constraint $\overline{C}$ is enforced by the Legendre kernel $\phi$.
The essential smoothness of \(\phi\) ensures that every well-defined mirror
update lies in \(X^\circ\), while accumulation points may lie in the relative boundary.  The
first-order condition for the mirror subproblem yields, for some
\(\lambda_{k+1}\in\R^m\), such that
\begin{equation}\label{eq:intro-dual-increment}
 \frac{\nabla\phi(x_{k+1})-\nabla\phi(x_k)}{\alpha_k}
 +\nabla f(x_k)+A^\transpose\lambda_{k+1}=0,
 \qquad Ax_{k+1}=b.
\end{equation}
Formally replacing the dual increment in \eqref{eq:intro-dual-increment} by a
time derivative suggests the mirror flow
\begin{equation}
 \frac{\dd}{\dd t}\nabla\phi(x(t))
 +\nabla f(x(t))+A^\transpose\lambda(t)=0,
 \qquad Ax(t)=b.
 \label{eq:intro-flow}
\end{equation}
When \(\phi\) is \(C^2\) on \(C\), \eqref{eq:intro-flow} becomes the
Hessian--Riemannian gradient flow on the affine space \(L\)
\cite{alvarez2004hessian,attouch2004singular,bomze2019hessian,boltepauwels2022curiosities,DingToh2025SBSG,dingtoh2025interior}: \(\nabla^2\phi(x(t))\dot x(t)+\nabla f(x(t))+A^\transpose\lambda(t)=0,\; A\dot x(t)=0\),
which can be interpreted as the gradient flow on \(L\) endowed with the
Riemannian metric induced by \(\nabla^2\phi\) on \(C\).

The Bregman geometry is especially natural for optimization over probability
distributions and couplings.  On the simplex, the Shannon entropy kernel
yields normalized multiplicative updates.  On a transportation polytope, the
entropic mirror step is a Kullback--Leibler projection that can be computed by
Sinkhorn scaling~\cite{cuturi2013sinkhorn,peyre2019computational}.  These
projections underlie algorithms in computational optimal transport and in
problems based on the Gromov--Wasserstein distance, including graph matching
and barycenter computation
\cite{memoli2011gromov,peyre2016gromov,li2023gw,rioux2024entropic}.  The
feasible sets contain lower-dimensional faces representing sparse
distributions and couplings, and solutions of interest need not be strictly
positive.  Boundary limits are therefore natural in these problems, not pathological rare cases.

However, convergence of mirror descent for nonconvex problems to a \emph{boundary}
KKT point has long remained unclear.  As the iterates approach the boundary, the
Legendre gradient blows up, so the mirror-step optimality condition cannot be
passed directly to the boundary limit.  Existing work often circumvents this
boundary difficulty by assuming the iterates or solutions of interest to
remain in the interior, modifying the original problem (e.g., by adding
an entropic regularizer or imposing explicit constraint strictly away from the boundary),
or exploiting special problem structure to establish that the solutions of
interest lie in the interior.  Recent
work~\cite{dingtoh2026nonkkt} constructs a counterexample in which a mirror
descent sequence has a non-KKT boundary accumulation point, as detailed in
Section~\ref{sec:intro-boundary-difficulty}.

\subsection{Challenges in boundary KKT recovery}
\label{sec:intro-boundary-difficulty}

A fundamental convergence guarantee for optimization algorithms is that every
accumulation point is KKT stationary.  For Euclidean methods (e.g.
gradient descent, the proximal point method, and the proximal gradient method),
this requirement is typically certified through a generic update
\begin{equation}\label{eq:intro-euclidean-residual}
 x_{k+1}=x_k-\alpha_kG_{\alpha_k}(x_k),
\end{equation}
where \(G_\alpha\) is a stationarity residual.  Under the standard assumptions
for each method, \(G_{\alpha_k}(x_k)\to0\), and the associated optimality
relation can be passed along any convergent subsequence to obtain KKT
stationarity.  By contrast, the Bregman stationarity residuals used for mirror
descent depend on \(\nabla\phi\) and are defined only on the interior \(X^\circ\).
Even if these residuals tend to zero along interior iterates, a boundary
accumulation point need not be KKT stationary.

\medskip
\noindent\textbf{Stationarity measures in mirror descent.}
Existing nonconvex analysis of Bregman methods typically formulates
stationarity through an interior residual.  A basic choice is the Bregman
displacement \(R_\alpha(x):=D_\phi(T_\alpha(x),x)\) of one mirror step,
used in composite optimization and variational-inequality analysis
\cite{DingLiToh2025SBPG,dangLan2015}. Another choice uses a Bregman proximal point of the full
objective~\cite{zhangHe2018}.  For \(\alpha>0\), let
\(p_\alpha(x)\in\arg\min_{y\in X}
\{f(y)+\alpha^{-1}D_\phi(y,x)\}\) and define
\begin{equation*}
 \Delta_\alpha(x):=
 \frac{D_\phi(x,p_\alpha(x))+D_\phi(p_\alpha(x),x)}{\alpha^2}.
\end{equation*}
If \(\nabla\phi\) is \(M\)-Lipschitz on \(X\), proximal optimality at
\(\widehat x:=p_\alpha(x)\) gives
\begin{equation*}
 \dist^2\!\left(0,\nabla f(\widehat x)+N_X(\widehat x)\right)
 \leq M\Delta_\alpha(x).
\end{equation*}
Both residuals are defined only in the interior, and this estimate requires a
uniform Lipschitz bound on \(\nabla\phi\).  Hence this framework does not
directly certify KKT stationarity at a boundary limit.

\medskip
\noindent\textbf{Spurious stationary points.}
Even when \(R_\alpha\) is extended to the boundary, its vanishing need not imply
KKT stationarity.  Take \(\overline C=\R_+^n\) and, for \(\bar x\in X\), set
\(B(\bar x):=\{i:\bar x_i=0\}\), \(I(\bar x):=\{i:\bar x_i>0\}\), and
\(M(\bar x):=\{y\in\R^n:y_i=0\text{ for }i\in B(\bar x)\}\).
For \(\phi(x)=\sum_i h(x_i)\), the boundary extension
in~\cite{chen2026spurious} freezes the zero coordinates:
\begin{equation}\label{eq:intro-boundary-step}
 \overline T_\alpha(\bar x)
 \in\arg\min_{y\in X\cap M(\bar x)}
 \left\{
  \ip{\nabla f(\bar x)}{y-\bar x}
  +\frac1\alpha\sum_{i\in I(\bar x)}D_h(y_i,\bar x_i)
 \right\}.
\end{equation}
Its residual is \(\overline R_\alpha(\bar x):=\sum_{i\in I(\bar x)}
D_h([\overline T_\alpha(\bar x)]_i,\bar x_i)\).  Its vanishing certifies only
the first-order equalities indexed by \(I(\bar x)\):
\begin{equation}\label{eq:intro-face-stationarity}
 \overline R_\alpha(\bar x)=0
 \quad\Longleftrightarrow\quad
 \exists\lambda\in\R^m:\ 
 [\nabla f(\bar x)+A^\transpose\lambda]_i=0
 \quad\bigl(i\in I(\bar x)\bigr).
\end{equation}
KKT stationarity additionally requires the boundary inequalities indexed by
\(B(\bar x)\):
\begin{equation}\label{eq:intro-active-kkt}
 \exists\lambda\in\R^m:\qquad
 [\nabla f(\bar x)+A^\transpose\lambda]_i=0
 \quad\bigl(i\in I(\bar x)\bigr),\qquad
 [\nabla f(\bar x)+A^\transpose\lambda]_j\geq0
 \quad\bigl(j\in B(\bar x)\bigr).
\end{equation}
A point satisfying \eqref{eq:intro-face-stationarity} but not
\eqref{eq:intro-active-kkt} is a spurious stationary point.  As shown
in~\cite{dingtoh2025interior}, the spurious stationary points are
precisely the non-KKT equilibria of the mirror flow~\eqref{eq:intro-flow}.  The
finite-horizon trapping and finite-exit results
in~\cite{chen2026spurious,dingtoh2025interior} leave the asymptotic question
open: they neither prove convergence to a KKT point nor provide a counterexample.
For further discussion of spurious stationary
points and their relation to mirror-flow equilibria, see
\cite{dingtoh2025interior,dingtoh2026nonkkt}.

\medskip
\noindent\textbf{Circumventing the boundary difficulty.}
Prior convergence analysis has generally avoided this difficulty through
interiority conditions, modifications of the original problem, or additional
structural assumptions.  For example, the analysis
in~\cite{mukkamala2022global} requires the entire cluster set of the iterates to
remain in the interior.  For stochastic mirror descent with random reshuffling,
Qiu et al.~\cite{qiu2026shuffling} establish last-iterate convergence under
boundedness of \(\{\nabla\phi(x_k)\}\), a condition that excludes boundary
accumulation.  Other
approaches modify the problem itself, either by adding an entropic regularizer
or a barrier term to force the relevant solution into the interior, or by explicitly
imposing additional constraints that keep the relevant variables away from
the boundary of \(\operatorname{dom}\phi\)
\cite{mukkamala2022global,scetbon2022linear}.  The boundary issue can also be
absent when problem-specific structure guarantees that local minimizers are
interior~\cite{khoo2025bregman}.  None of these results establishes
convergence of the iterates to a KKT point when the limit lies on the boundary.

\medskip
\noindent\textbf{Benign behavior under sequence convergence.}
Under the standing constraint qualification, it follows from
\cite[Proposition~5.3]{dingtoh2025interior} that every convergent sequence
generated by \eqref{eq:intro-md}, with positive nonsummable stepsizes, has a
KKT limit.  In short,
\[
 x_k\to x_\star,\qquad \alpha_k>0,\qquad
 \sum_{k=0}^\infty\alpha_k=\infty
 \quad\Longrightarrow\quad
 x_\star\ \text{satisfies the KKT condition}.
\]
Thus, boundary KKT convergence follows once sequence convergence is
established.  Several structured settings exhibit related benign behavior.
For exponentiated gradient with an Armijo line search, the objective values
converge to the optimal value for a convex differentiable objective provided
that the sequence has a strictly positive limit
point~\cite{li2019exponentiated}.  Convergence of the iterates is available for
convex mirror descent under
nonsummable but square-summable stepsizes~\cite{doan2019iterates}.  Under
variational coherence, the last iterate of stochastic mirror descent converges
almost surely to a minimizer~\cite{zhou2020convergence}.  Early continuous-time
results established trajectory convergence for Hessian--Riemannian flows under
quasiconvexity and for related singular Riemannian barrier flows with convex
objectives on polyhedra~\cite{alvarez2004hessian,attouch2004singular}.  More recent
continuous-time results establish trajectory convergence to a KKT point when
the boundary equilibria are isolated~\cite{dingtoh2025interior}.  However,
for a general nonconvex objective, sequence convergence is far from trivial:
the iterates may wind indefinitely near the boundary, as shown by the
counterexample in~\cite{dingtoh2026nonkkt}.

In nonconvex optimization, the Kurdyka--\L{}ojasiewicz (KL) inequality is a
standard tool for proving finite length and hence sequence
convergence~\cite{bolte2007lojasiewicz,attouch2013convergence}.  However, for
mirror descent, the inverse mirror metric may degenerate at the boundary, so the
same argument need not yield finite length.  To illustrate this obstruction,
consider mirror flow in the unconstrained setting \(L=\R^n\).
Then the mirror flow satisfies
\[
 \nabla^2\phi(x(t))\dot x(t)+\nabla f(x(t))=0,
 \qquad f(x(t))\downarrow f_\infty.
\]
Suppose that, on a nonstationary tail, the KL inequality
\(\vartheta'(f(x)-f_\infty)\norm{\nabla f(x)}\geq1\) holds for a concave
desingularizing function \(\vartheta\) with \(\vartheta(0)=0\) and
\(\vartheta'>0\).  When \(\nabla^2\phi=I\), mirror flow reduces to Euclidean
gradient flow, and
\[
 -\frac{\dd}{\dd t}\vartheta\bigl(f(x(t))-f_\infty\bigr)
 =\vartheta'\bigl(f(x(t))-f_\infty\bigr)\norm{\nabla f(x(t))}^2
 \geq\norm{\nabla f(x(t))}
 =\norm{\dot x(t)},
\]
so integration yields finite length.  For mirror flow, the KL inequality gives
\begin{align*}
 -\frac{\dd}{\dd t}\vartheta\bigl(f(x(t))-f_\infty\bigr)
 &=\vartheta'\bigl(f-f_\infty\bigr)
   \ip{\nabla f}{[\nabla^2\phi]^{-1}\nabla f}\geq
 \frac{\ip{\nabla f}{[\nabla^2\phi]^{-1}\nabla f}}
 {\norm{\nabla f}\norm{[\nabla^2\phi]^{-1}\nabla f}}
 \norm{\dot x}.
\end{align*}
If \(\phi\) is strongly convex and \(\nabla\phi\) is Lipschitz, then the
coefficient in the last inequality is bounded away from zero.  However, for Legendre kernels, \([\nabla^2\phi]^{-1}\) degenerates near the boundary of \(\operatorname{dom}\phi\), so this
coefficient need not be bounded away from zero and this direct KL argument
does not imply finite length.

\medskip
\noindent\textbf{Non-KKT accumulation.}
Recent work~\cite{dingtoh2026nonkkt} shows that the boundary obstruction is
genuine: mirror descent can accumulate at non-KKT boundary points.  In this
counterexample, the objective is \(C^\infty\) and entropy-relatively smooth,
and the mirror descent sequence is bounded with nonincreasing objective values
and diminishing nonsummable stepsizes.  Both the mirror-flow trajectory and
the mirror descent sequence are nonconvergent: the trajectory approaches the
boundary while winding indefinitely around a smooth closed curve of
equilibria, and the sequence lies on this trajectory and shares its
accumulation set, which contains a nonempty relatively open subset of non-KKT
points.

Motivated by these challenges, we aim to address the boundary difficulty directly.
The counterexample shows that this requires ruling out nonconvergent boundary
motion, which can produce non-KKT accumulation, whereas sequence convergence
forces a KKT limit.  The standard KL argument, however, may lose the length
control needed for sequence convergence as the inverse mirror metric
degenerates, while the usual stationarity measures need not certify the
active-constraint inequalities.  This raises the question:

\begin{quote}
\itshape
Can we establish convergence of mirror descent to a KKT point at the boundary
of \(\operatorname{dom}\phi\) under verifiable joint conditions on the
objective, the Legendre kernel, and the feasible region?
\end{quote}

\paragraph{Contributions.}
Our contributions are threefold.
\begin{enumerate}
\item \textbf{KKT convergence with boundary limits.}
We develop a metric-flattening reparameterization framework to handle the blow-up of \(\nabla^2\phi(x)\) at the boundary. Under a definable boundary
extension and a curvature condition for the kernels, we prove that
mirror descent with \(\alpha_k\in(0,1/L)\) and
\(\sum_{k=0}^{\infty}\alpha_k=\infty\) has finite length in the
reparameterized sequence and converges to a KKT point in the original sequence. We also derive
convergence rates for the reparameterized sequence from the KL exponent of the reparameterized objective. We further establish
convergence of mirror-flow trajectories to a KKT point.

\item \textbf{Uniform bound on the Lagrangian gradient.}
We introduce a general conformal-circuit argument that yields the
uniform bound
\(\|\nabla f(x)+A^\transpose\lambda\|_\infty\leq\Gamma\) on the Lagrangian
gradient, where \(\lambda\) is the affine-constraint multiplier of the mirror
subproblem. The constant \(\Gamma\) is independent of the distance from \(x\)
to the boundary of \(\operatorname{dom}\phi\). This uniform bound is a key
ingredient in the KL argument proving finite length of the reparameterized
sequence.

\item \textbf{Examples and coordinatewise rates.}
We provide several concrete examples satisfying the conditions of our
framework: Shannon entropy, Fermi--Dirac entropy, and power kernels. For these examples, transferring the KL rates from the reparameterized variables shows that different coordinates of the original sequence may converge at different rates, depending on whether their limits lie in the interior or on the boundary.
\end{enumerate}

\noindent\textbf{Independent related work.}
We compare our results with two closely related,
independently developed preprints~\cite{chen2026iterate,chen2026unified} that
appeared on arXiv shortly before our initial arXiv submission.%
\footnote{The two preprints were submitted to arXiv on August 5 and August 6,
2026, respectively; the first version of the present manuscript was submitted
to arXiv on August 7, 2026.}
In~\cite{chen2026iterate}, iterate convergence for Shannon-entropy Bregman
projected gradient under linear constraints is established through a scaled
Kurdyka--\L{}ojasiewicz property, while~\cite{chen2026unified} develops a
unified convergence framework for separable Bregman proximal point and
proximal gradient methods with composite objectives.  
%The former work~\cite{chen2026iterate} was discussed in the first
%version.  
Since the latter work \cite{chen2026unified} overlaps more
broadly with our present work, though both works are done independently, the comparison below focuses on~\cite{chen2026unified}.  The two
approaches adopt different starting points.  The iterate-convergence framework
in~\cite{chen2026unified} is formulated in the original variables and assumes
an extended scaled Kurdyka--\L{}ojasiewicz property.
The parameterization associated with the kernel is introduced afterward to
show that this assumption holds for proper subanalytic objectives that are
continuous on their domains when the kernel has a closed domain.  Our
framework instead begins with the metric-flattening reparameterization
\(z=S(x)\) itself.  We require \(S^{-1}\) to extend continuously and definably to
\(\cl S(X^\circ)\), and apply the ordinary KL property directly to the reparameterized objective
\(E\) along
\(z_k=S(x_k)\).  The reparameterization map
\(\psi(t)=\int_{t_0}^t\sqrt{\varphi''(u)}\,\dd u\)
in~\cite{chen2026unified} coincides with \(S_i\).  Thus, the two works share
the same reparameterization map as a key ingredient, but use the map at
different stages in establishing iterate convergence.  For mirror flow, the
two formulations are closer,
since~\cite{chen2026unified} also uses this map directly to obtain a Euclidean
subgradient flow.

The analysis of the required boundary-uniform estimates are also different.  The
work~\cite{chen2026unified} derives them through a detailed
analysis of the kernel and feasible-set structure, using kernel regularity
together with either polyhedrality of the feasible set or a constraint
qualification.  Our argument is more direct: a
conformal circuit decomposition gives
\(\sup_{k\geq0}\norm{\nabla f(x_k)+A^\transpose\lambda_{k+1}}_\infty<\infty\),
where \(\lambda_{k+1}\) is the subproblem multiplier.  
The results
in~\cite{chen2026unified} require stepsizes to be bounded away from zero.  Our result
allows a more flexible choice of positive nonsummable stepsizes, which may
diminish to zero.  This flexibility may facilitate potential extensions to settings
in which diminishing stepsizes are needed, such as in stochastic optimization
and subgradient methods for nonsmooth problems.  We also derive
convergence rates for the reparameterized sequence based on the KL exponent.
We then use several specific examples to illustrate the different convergence
rates of boundary and interior coordinates.
We also acknowledge that the framework in~\cite{chen2026unified} covers a broader range of problems and
algorithms, including composite
nonsmooth objectives and Bregman proximal point and proximal gradient methods (mirror descent).

\paragraph{Notation.}
For an integer \(r\geq1\), write \([r]:=\{1,\ldots,r\}\),
\(\R_+^r:=\{u\in\R^r:u_i\geq0\ \text{for all }i\in[r]\}\), and
\(\R_{++}^r:=\{u\in\R^r:u_i>0\ \text{for all }i\in[r]\}\).  The vector
\(\1_r\) denotes the all-ones vector.  For \(u,v\in\R^r\), their Hadamard product is
denoted by \(u\odot v\).  For \(v\in\R^r\), \(\Diag(v)\) denotes the diagonal
matrix with diagonal \(v\).  For a differentiable map
\(F:\R^p\to\R^q\), \(DF(x)\in\R^{q\times p}\) denotes its Jacobian at
\(x\).  Scalar functions and powers applied to vectors are
understood componentwise.  For a set \(S\), \(\cl S\) and \(\ri S\) denote its
closure and relative interior.  When \(S\) is closed and convex, \(N_S(x)\)
denotes its convex normal cone at \(x\).  For a vector \(u\),
\(\supp(u):=\{i:u_i\neq0\}\).  Unless indicated otherwise,
\(\ip{\cdot}{\cdot}\), \(\norm{\cdot}\), and \(\dist\) are the Euclidean
inner product, norm, and induced point-to-set distance.

The remainder of the paper is organized as follows.
Section~\ref{sec:preliminaries} presents the variational notions
used in the analysis.  Section~\ref{sec:geometry} develops the metric flattening
and definable boundary extension.  Section~\ref{sec:circuit}
establishes finite length in the reparameterized variables and KKT
convergence.  Section~\ref{sec:applications} applies the general framework to
Shannon entropy, Fermi--Dirac entropy, and power kernels.  Section~\ref{sec:conclusion} concludes the paper.
Some technical proofs are deferred to the appendix.

\section{\texorpdfstring{Preliminaries}{Preliminaries}}
\label{sec:preliminaries}

This section presents some preliminaries used in the convergence analysis.

\begin{definition}[Limiting subdifferential]
Let \(E:\R^d\to(-\infty,+\infty]\) be proper and lower semicontinuous, and
let \(z\in\dom E\).  Its Fr\'echet and limiting subdifferentials
\cite{rockafellar1998variational} are
\begin{align*}
 \widehat\partial E(z)
 &:=\left\{v:\liminf_{\substack{w\to z\\w\neq z}}
 \frac{E(w)-E(z)-\ip{v}{w-z}}{\norm{w-z}}\geq0\right\},\\
 \partial E(z)
 &:=\left\{v:\begin{array}{l}
 \text{there exist }z^\nu\to z\text{ and }v^\nu\to v\text{ such that}\\[-1mm]
 E(z^\nu)\to E(z)\text{ and }v^\nu\in\widehat\partial E(z^\nu)
 \end{array}\right\}.
\end{align*}
\end{definition}

\begin{definition}[Definability {\cite[Definitions~6--7]{bolte2007clarke}}]
An o-minimal structure on the real field is a sequence
\(\mathcal O=(\mathcal O_r)_{r\geq1}\), where each \(\mathcal O_r\) is a
Boolean algebra of subsets of \(\R^r\), satisfying the following properties:
\begin{enumerate}[label=\textnormal{(\roman*)}]
 \item if \(A\in\mathcal O_r\), then
       \(A\times\R\in\mathcal O_{r+1}\) and
       \(\R\times A\in\mathcal O_{r+1}\);
 \item if \(A\in\mathcal O_{r+1}\), then its projection onto the first
       \(r\) coordinates belongs to \(\mathcal O_r\);
 \item every real algebraic subset of \(\R^r\) belongs to \(\mathcal O_r\);
 \item the sets in \(\mathcal O_1\) are precisely the finite unions of
       points and intervals.
\end{enumerate}
A set \(A\subseteq\R^r\) is \emph{definable} in \(\mathcal O\) if
\(A\in\mathcal O_r\), and a function is definable if its graph is definable.
An extended-real-valued function \(E\) is definable if \(\dom E\) and the
graph of its restriction to \(\dom E\) are definable.  Throughout, all
definability conditions refer to one fixed o-minimal structure.  We write
\(\R_{\mathrm{an},\exp}\) for the o-minimal expansion of the real field by
restricted real-analytic functions and the exponential function.
\end{definition}
Definable sets are closed under finite unions, complements, and coordinate
projections.  In particular, the
image of a definable set under a definable map and the closure of
a definable set are definable.
Every proper lower-semicontinuous definable function satisfies the KL
property~\cite{kurdyka1998gradients,bolte2007lojasiewicz,bolte2007clarke}, as stated below.

\begin{definition}[Kurdyka--\L{}ojasiewicz property]
The function \(E\) has the Kurdyka--\L{}ojasiewicz (KL) property at a point
\(\bar z\) with \(\partial E(\bar z)\neq\varnothing\) if there exist a neighborhood \(U\) of \(\bar z\),
a number \(\eta>0\), and a continuous concave function
\(\vartheta:[0,\eta)\to\R_+\), with \(\vartheta(0)=0\),
\(\vartheta\in C^1(0,\eta)\), and \(\vartheta'>0\), such that
\begin{equation}\label{eq:KL}
 \vartheta'(E(z)-E(\bar z))\dist(0,\partial E(z))\geq1
\end{equation}
whenever \(z\in U\) and
\(E(\bar z)<E(z)<E(\bar z)+\eta\).  It has KL exponent
\(\theta\in[0,1)\) at \(\bar z\) if \(\vartheta\) can be chosen as
\(\vartheta(s)=c_0s^{1-\theta}\) for some \(c_0>0\).  Equivalently,
\(\dist(0,\partial E(z))\geq c(E(z)-E(\bar z))^\theta\) locally for some
\(c>0\).
\end{definition}

\begin{definition}[Relative smoothness {\cite{bauschke2017descent,lu2018relative,bolte2018first}}]
For \(L\geq0\), we say that \(f\) is \(L\)-smooth relative to \(\phi\) on
\(X^\circ\) if
\begin{equation}\label{eq:relative-smooth}
 \abs{f(y)-f(x)-\ip{\nabla f(x)}{y-x}}\leq L D_\phi(y,x)
 \qquad\forall\; x,y\in X^\circ.
\end{equation}
\end{definition}
The Legendre kernel and relative smoothness yields the following well-posedness and standard one-step descent estimate.
\begin{proposition}[Well-definedness
{\cite[Lemma~2]{bauschke2017descent}}]
\label{prop:mirror-step}
If \(X\) is compact, then for every \(x\in X^\circ\) and
\(\alpha>0\), the mirror subproblem~\eqref{eq:intro-md} has a unique
minimizer \(T_\alpha(x)\in X^\circ\).
\end{proposition}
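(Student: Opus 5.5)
We establish existence, then uniqueness, and finally show that the minimizer lies in the relative interior. Fix \(x\in X^\circ\) and \(\alpha>0\). The mirror objective is
\[
\Psi(y):=\ip{\nabla f(x)}{y-x}+\frac1\alpha D_\phi(y,x),
\]
extended by \(+\infty\) off \(\dom\phi\) and restricted to \(L\). It equals \(\frac1\alpha\phi(y)\) plus an affine function of \(y\). Since \(\phi\) is proper, closed and convex, \(\Psi\) is proper, lower semicontinuous and convex on \(L\). Its effective domain within \(L\) is \(L\cap\dom\phi\subseteq L\cap\overline C=X\), which is compact by assumption. The domain is nonempty because \(x\in X^\circ\subseteq\dom\phi\). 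A lower semicontinuous function on a nonempty compact set attains its infimum, so a minimizer \(y^\ast\in X\cap\dom\phi\) exists.

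For uniqueness, take two minimizers \(y_1\neq y_2\). The sublevel set \(\{y\in L:\Psi(y)\le\min\Psi\}\) is convex, so the whole segment \([y_1,y_2]\) consists of minimizers. We cannot apply strict convexity directly, since it only holds on convex subsets of \(\dom\partial\phi\). Each \(h_i\) is a one-dimensional Legendre function, so \(\dom\partial h_i=\operatorname{int}\dom h_i=I_i\) (Rockafellar, Thm.~26.1). Hence \(\dom\partial\phi=C\). The open segment \((y_1,y_2)\) meets \(C\) as soon as both endpoints are suitably placed, but that is not automatic. We therefore first prove interiority and then deduce uniqueness.

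For interiority, suppose a minimizer \(y^\ast\) lies in \(X\setminus C\). Consider \(y_t:=y^\ast+t(x-y^\ast)\in L\) for \(t\in(0,1]\); since \(x\in C\), we have \(y_t\in C\) for \(t>0\). By convexity of \(\phi\),
\[
\frac{\dd}{\dd t}\Psi(y_t)=\ip{\nabla f(x)}{x-y^\ast}+\frac1\alpha\ip{\nabla\phi(y_t)-\nabla\phi(x)}{x-y^\ast}.
\]
This derivative is nondecreasing in \(t\), and by the standard essential smoothness argument (Rockafellar, Lemma~26.2) it tends to \(-\infty\) as \(t\downarrow0\). The reason is separability: every coordinate \(i\) with \(y^\ast_i\) at an endpoint of \(I_i\) has \(h_i'((y_t)_i)(x_i-y^\ast_i)\to-\infty\). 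Coordinates with \(y^\ast_i\in I_i\) contribute bounded terms. So \(\Psi(y_t)<\Psi(y^\ast)\) for small \(t\), contradicting minimality. Thus every minimizer lies in \(C\cap L=X^\circ\).

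Uniqueness now follows: two distinct minimizers would both lie in \(C\), so the segment between them lies in \(C=\dom\partial\phi\). There \(\phi\), and hence \(\Psi\), is strictly convex, so the midpoint has a strictly smaller value, a contradiction. The main obstacle is the interiority step, specifically making the one-sided derivative blow-up rigorous. I would handle it coordinatewise through the one-dimensional Legendre property \(h_i'(s)\to\mp\infty\) at the finite endpoints of \(I_i\). Together with monotonicity of \(h_i'\), this gives the required sign and the divergence.
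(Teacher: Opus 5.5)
Your argument is correct. The paper gives no proof of its own; it imports \cite[Lemma~2]{bauschke2017descent}. Specialized to the nonsmooth term $\iota_L$, that argument gets existence from coercivity (here replaced by compactness of $X$). It gets interiority from the optimality condition $0\in\alpha\nabla f(x)-\nabla\phi(x)+\partial\phi(y^\ast)+N_L(y^\ast)$. There, the subdifferential sum rule for $\phi+\iota_L$, justified by the qualification $L\cap C\neq\varnothing$, forces $\partial\phi(y^\ast)\neq\varnothing$, and hence $y^\ast\in\dom\partial\phi=C$ by \cite[Theorem~26.1]{rockafellar1970convex}.

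You replace this step with a direct one: the derivative of $t\mapsto\Psi(y^\ast+t(x-y^\ast))$ tends to $-\infty$ as $t\downarrow0$. In effect you reprove by hand the fact that makes $\partial\phi$ empty on the boundary. You do it via separability, though Rockafellar's Lemma~26.2 gives the same for any essentially smooth $\phi$. The given point $x\in X^\circ$ plays the role of the constraint qualification. The cited route is shorter once subdifferential calculus is available, and it extends verbatim to a general convex nonsmooth term in place of $\iota_L$. Yours is elementary and self-contained.

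Your ordering (interiority before uniqueness) is also the careful one. Essential strict convexity only gives strict convexity on convex subsets of $\dom\partial\phi=C$.

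To close the interiority step cleanly, use the gradient inequality at $y_t\in C$. Since $\nabla\phi(y_t)\in\partial\phi(y_t)$ and $y^\ast\in\dom\phi$, one has $\Psi(y^\ast)\ge\Psi(y_t)+\ip{\nabla\Psi(y_t)}{y^\ast-y_t}=\Psi(y_t)-t\,\frac{\dd}{\dd t}\Psi(y_t)$. So a single $t$ with negative derivative already gives $\Psi(y_t)<\Psi(y^\ast)$, and no continuity question at $t=0$ arises.
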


\begin{lemma}\label{lem:bregman-decrease}
Let \(x\in X^\circ\) and \(x^+:=T_\alpha(x)\), and suppose that \(f\) is
\(L\)-smooth relative to \(\phi\).  Then
\begin{align}
 f(x)-f(x^+)
 &\geq\left(\frac1\alpha-L\right)D_\phi(x^+,x)
 +\frac1\alpha D_\phi(x,x^+).
\label{eq:bregman-decrease}
\end{align}
\end{lemma}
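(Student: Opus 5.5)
The plan is to compare \(f(x^+)\) and \(f(x)\) through the upper half of relative smoothness, and to control \(\ip{\nabla f(x)}{x^+-x}\) using the optimality of \(x^+\) in the mirror subproblem. Both \(x\) and \(x^+\) lie in \(X^\circ\): the first by hypothesis, the second by Proposition~\ref{prop:mirror-step} (or, more directly, by essential smoothness of \(\phi\) whenever the step is well defined). Hence \eqref{eq:relative-smooth} applies with \(y=x^+\) and gives
\[
 f(x^+)\le f(x)+\ip{\nabla f(x)}{x^+-x}+L\,D_\phi(x^+,x).
\]

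Next I will evaluate \(\ip{\nabla f(x)}{x^+-x}\) exactly from the first-order condition \eqref{eq:intro-dual-increment}. By that condition there is \(\lambda\in\R^m\) with
\[
 \nabla f(x)=-\frac1\alpha\bigl(\nabla\phi(x^+)-\nabla\phi(x)\bigr)-A^\transpose\lambda .
\]
Since \(Ax^+=Ax=b\), the term \(\ip{A^\transpose\lambda}{x^+-x}\) vanishes. This yields
\[
 \ip{\nabla f(x)}{x^+-x}=-\frac1\alpha\ip{\nabla\phi(x^+)-\nabla\phi(x)}{x^+-x}.
\]

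The key identity is the symmetrized Bregman identity, which I will check by adding the two definitions:
\[
 \ip{\nabla\phi(x^+)-\nabla\phi(x)}{x^+-x}=D_\phi(x^+,x)+D_\phi(x,x^+).
\]
Both divergences are well defined because both points lie in \(C\), where \(\phi\) is differentiable.

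Substituting these into the smoothness bound gives
\[
 f(x^+)\le f(x)-\frac1\alpha\bigl(D_\phi(x^+,x)+D_\phi(x,x^+)\bigr)+L\,D_\phi(x^+,x),
\]
which rearranges to \eqref{eq:bregman-decrease}. The only point needing care is justifying the first-order condition with a multiplier, that is, that the subproblem minimizer is interior so the constraint \(y\in\overline C\) is inactive. This follows from essential smoothness together with Proposition~\ref{prop:mirror-step}. Once that holds, the affine constraint is handled by a standard Lagrange multiplier, since the objective is differentiable on \(C\) and the constraint is linear.
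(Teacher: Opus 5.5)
Your proposal is correct and matches the paper's proof. Both use the optimality condition \eqref{eq:intro-dual-increment} with $Ax^+=Ax$ to rewrite $\ip{\nabla f(x)}{x^+-x}$ as $-\alpha^{-1}\bigl(D_\phi(x^+,x)+D_\phi(x,x^+)\bigr)$, then insert this into the upper bound of \eqref{eq:relative-smooth}; your remarks on the interiority of $x^+$ and the existence of the multiplier make explicit what the paper takes from its setup.
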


\begin{proof}
By the optimality condition~\eqref{eq:intro-dual-increment},
\(\ip{\nabla f(x)}{x^+-x}= {-\alpha^{-1}\ip{\nabla\phi(x^+)-\nabla\phi(x)}{x^+-x}}
=-\alpha^{-1}
\bigl(D_\phi(x^+,x)+D_\phi(x,x^+)\bigr)\).
Combining this equality with \eqref{eq:relative-smooth} yields
\eqref{eq:bregman-decrease}.
\end{proof}

\section{\texorpdfstring{Metric-Flattening Reparameterization}{Metric flattening at the boundary}}\label{sec:geometry}

This section develops the general framework of a metric-flattening
reparameterization with a definable boundary extension, which will be  used in
Section~\ref{sec:circuit} to establish convergence.
The reparameterization removes the boundary degeneracy of
\([\nabla^2\phi(x)]^{-1}\).

% \subsection{\texorpdfstring{Metric flattening and definable boundary extension}{Metric flattening and definable boundary extension}}

To handle the degeneracy of \([\nabla^2\phi(x)]^{-1}\) at the boundary, we use a
coordinatewise metric-flattening reparameterization for the separable kernel
\(\phi(x)=\sum_{i=1}^n h_i(x_i)\) for $x=(x_1,\ldots,x_n)$ on \(C=\prod_{i=1}^n I_i\).  Throughout
Sections~\ref{sec:geometry} and~\ref{sec:circuit}, we assume that each \(h_i\) is
a \(C^3\) Legendre kernel on \(I_i=(\ell_i,u_i)\) with \(h_i''>0\).
Consequently, \(h_i'(t)\to-\infty\) at a lower endpoint and
\(h_i'(t)\to+\infty\) at a upper endpoint.  We use the
lower-semicontinuous extensions of \(h_i\) and \(\phi\), with values in
\(\R\cup\{+\infty\}\).

Fix \(\bar x_i\in I_i\) and define the metric-flattening
reparameterization
\begin{equation}\label{eq:S}
 S_i(x_i):=\int_{\bar x_i}^{x_i}\sqrt{h_i''(u)}\,\dd u,\qquad
 S(x):=(S_1(x_1),\ldots,S_n(x_n)).
\end{equation}
Each \(S_i\) is a \(C^2\) increasing diffeomorphism from
\(I_i\) onto an open interval \(J_i\).  Thus \(S\) is a \(C^2\)
diffeomorphism from \(C\) onto \(S(C)\), with \(z=S(x)\) and
\(x=S^{-1}(z)\).  The Jacobian of \(S^{-1}\) at \(z\in S(C)\) is
\begin{equation}\label{eq:DSinverse}
 D(S^{-1})(z)
 =\Diag\left(
 \frac1{\sqrt{h_i''(S_i^{-1}(z_i))}}
 \right)_{i=1}^n.
\end{equation}
Consequently,
\begin{equation}\label{eq:flatten}
 D(S^{-1})(z)^\transpose\nabla^2\phi(S^{-1}(z))D(S^{-1})(z)=I_n.
\end{equation}
The reparameterized interior feasible set is
\begin{equation} \label{eq:N}
\cN^\circ:=S(X^\circ)=\{z\in S(C):A S^{-1}(z)=b\}.
\end{equation}
Its closure is denoted as \(\cN:=\cl\cN^\circ\).
By~\eqref{eq:flatten},
\(S\) transforms the Hessian metric into the identity on \(S(C)\).
The transformed metric is defined on all of \(\R^n\) and therefore remains
nondegenerate as points of \(\cN^\circ\) approach its boundary.

From a dynamical-systems perspective, the reparameterization
converts mirror flow into a subgradient flow.  Related connections between
mirror flow and reparameterized gradient flow are also studied in
\cite{li2022implicit,dingtoh2025interior}.  Suppose that \(S^{-1}\) extends continuously to
\(\cN\), and retain the same notation for this extension.  Define
the reparameterized objective \(E:\R^n\to(-\infty,+\infty]\) by
\begin{equation}\label{eq:E}
 E(z):=
 \begin{cases}
  f(S^{-1}(z)),&z\in\cN,\\
  +\infty,&z\notin\cN.
 \end{cases}
\end{equation}
For \(z\in\cN^\circ\), the sets \(\cN\) and \(\cN^\circ\) agree locally.
The full row rank of \(A D(S^{-1})(z)\) and standard subdifferential calculus
then give~\cite[Exercise~10.10]{rockafellar1998variational}
\begin{align}
 \partial E(z)
 &=\{D(S^{-1})(z)^\transpose
      (\nabla f(S^{-1}(z))+A^\transpose\mu):\mu\in\R^m\},
 \label{eq:subdiff}\\
 \dist(0,\partial E(z))
 &=\min_{\mu\in\R^m}
   \norm{D(S^{-1})(z)^\transpose
   (\nabla f(S^{-1}(z))+A^\transpose\mu)}.
 \label{eq:slope}
\end{align}
Let \(x(t)\in X^\circ\) solve \eqref{eq:intro-flow}, set
\(z(t):=S(x(t))\), and let \(\lambda(t)\) be the corresponding multiplier.
Since \(\dot x(t)=D(S^{-1})(z(t))\dot z(t)\), multiplying
\eqref{eq:intro-flow} by \(D(S^{-1})(z(t))^\transpose\) and using
\eqref{eq:flatten} give
\[
 0=\dot z(t)+D(S^{-1})(z(t))^\transpose
 \bigl(\nabla f(x(t))+A^\transpose\lambda(t)\bigr).
\]
By \eqref{eq:subdiff}, this identity yields
\begin{equation}\label{eq:subgradient-flow}
 \dot z(t)\in-\partial E(z(t)).
\end{equation}
Thus mirror flow becomes the Euclidean subgradient flow of \(E\) in the
reparameterized variables.  Under the KL property of \(E\), this equivalence
provides an intuition to proving finite length and convergence of the
reparameterized trajectory and motivates the analogous analysis of mirror
descent below.

To recover limits in the original variables, \(S^{-1}\) must extend
continuously to the boundary of \(\cN^\circ\).  The KL argument additionally
requires compactness and definability.  We formalize these conditions below.

\begin{definition}[Definable boundary extension]\label{def:boundary-extension}
Let \(S:C\to S(C)\) be the metric-flattening
reparameterization in~\eqref{eq:S}.   We say that \(S\) admits a definable
boundary extension for \(X=\overline{C}\cap L\) if the following conditions hold in
a common o-minimal structure:
\begin{enumerate}[label=(\roman*)]
 \item \(\cN:=\cl\cN^\circ\subset\R^n\) is compact and definable.
 \item The inverse restriction
       \(S^{-1}|_{\cN^\circ}:\cN^\circ\to X^\circ\) extends to a continuous
       definable map \(\cN\to X\).
\end{enumerate}
\end{definition}

%
% \begin{remark}
% Compactness of \(\cN\) is a convenient global assumption.  For a fixed
% initialization \(x_0\), it can be replaced in the convergence arguments below
% by requiring the sets
% \[
%  \mathcal L_0:=\{x\in X:f(x)\leq f(x_0)\},
%  \qquad
%  \cN_0:=\cl S(\mathcal L_0\cap X^\circ)
% \]
% to be compact, with \(\cN_0\) definable, \(S^{-1}\) extending continuously and
% definably to \(\cN_0\), and the coordinate-curvature bounds imposed on
% \(\mathcal L_0\cap X^\circ\).  Level-boundedness of \(f\) on \(X\) guarantees
% compactness of \(\mathcal L_0\), while compactness of \(\cN_0\) follows if each
% \(S_i\) is bounded on \(\{x_i:x\in\mathcal L_0\cap X^\circ\}\).  We use the
% global assumption to keep the main statements independent of \(x_0\).
% \end{remark}
%

The compactness condition is not essential to the convergence
argument and may be replaced by boundedness of the generated reparameterized
sequence, which can often be obtained from descent and level-boundedness. For convenience of presentation, we retain the global compactness assumption in the main statements. The boundary-extension condition can be verified coordinatewise as follows.

\begin{lemma}
\label{lem:boundary-extension-criterion}
For each \(i\), let \(\mathcal I_i:=\{x_i:x\in X^\circ\}\) and
\(\mathcal J_i:=S_i(\mathcal I_i)\).
Suppose that, for every \(i\), \(\cl\mathcal J_i\) is compact and
\(S_i^{-1}:\mathcal J_i\to\mathcal I_i\) extends to a continuous definable
map \(\cl\mathcal J_i\to\cl\mathcal I_i\).  Then \(S\) admits a definable
boundary extension in the sense of Definition~\ref{def:boundary-extension}.
Moreover,
\[
 \cN
 = \cl S(X^\circ)=\left\{z\in\prod_{i=1}^n\cl\mathcal J_i:A S^{-1}(z)=b\right\}.
\]
The extended inverse \(S^{-1}:\cN\to X\) is continuous, definable, and onto.
\end{lemma}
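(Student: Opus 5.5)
We build the extension coordinatewise and then identify \(\cN\) by a closure argument. Write \(\widetilde S^{-1}_i:\cl\mathcal J_i\to\cl\mathcal I_i\) for the given continuous definable extension of \(S_i^{-1}\), and set \(\widetilde S^{-1}(z):=(\widetilde S^{-1}_1(z_1),\ldots,\widetilde S^{-1}_n(z_n))\) on the compact box \(K:=\prod_{i}\cl\mathcal J_i\). This map is continuous. It is also definable, since its graph is obtained from the definable graphs of the \(\widetilde S^{-1}_i\) by products, permutation of coordinates, and intersection. Set \(M:=\{z\in K: A\widetilde S^{-1}(z)=b\}\). Then \(M\) is closed, as the preimage of the closed set \(\{b\}\) under a continuous map on the compact set \(K\), and hence compact. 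It is definable as the intersection of \(K\) with the preimage under a definable map of an algebraic set.

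The main step is to show \(\cN=\cl\cN^\circ=M\). We have \(\cN^\circ=S(X^\circ)\subset\prod_i\mathcal J_i\subset K\). On \(\cN^\circ\), \(\widetilde S^{-1}\) agrees with \(S^{-1}\), so \(A\widetilde S^{-1}=b\) there and \(\cN^\circ\subset M\). Since \(M\) is closed, \(\cN\subset M\).

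The reverse inclusion \(M\subset\cN\) is the expected obstacle. A point \(z\in M\) whose coordinates lie on the boundary of \(\mathcal J_i\) must be approximated by points of \(S(X^\circ)\), i.e.\ by points satisfying the affine constraint exactly. The first idea is to work in \(x\)-space. Put \(y:=\widetilde S^{-1}(z)\in\prod_i\cl\mathcal I_i\) with \(Ay=b\). Since \(\mathcal I_i\subset I_i\), we get \(y\in\overline C\cap L=X\). Pick \(x^\circ\in X^\circ\) and set \(y_t:=(1-t)y+tx^\circ\) for \(t\in(0,1]\). Then \(y_t\in X^\circ=\ri X\), because a segment from a point of a convex set to a relative-interior point lies in the relative interior except possibly at its start. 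As \(t\downarrow0\) we have \(y_t\to y\), and \(z_t:=S(y_t)\in\cN^\circ\).

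It remains to check \(z_t\to z\), which cannot be done by inverting \(\widetilde S^{-1}\) globally. Work coordinatewise: \(S_i\) is a strictly increasing homeomorphism \(\mathcal I_i\to\mathcal J_i\), and \(\widetilde S^{-1}_i\) is its continuous monotone extension. If \(y_i\) is interior to \(\cl\mathcal I_i\), then \(z_i=S_i(y_i)\) and continuity of \(S_i\) gives \(S_i(y_{t,i})\to z_i\). If \(y_i\) is an endpoint, say \(y_i=\inf\mathcal I_i\), then \(\widetilde S^{-1}_i(z_i)=y_i\) together with strict monotonicity of \(S_i^{-1}\) on \(\mathcal J_i\) forces \(z_i=\inf\mathcal J_i\), since any larger \(z_i\) would map to a value \(>\inf\mathcal I_i\). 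Monotone limits then give \(S_i(y_{t,i})\to\inf\mathcal J_i=z_i\). The same reasoning handles upper endpoints, and a degenerate \(\mathcal I_i\) is trivial. Hence \(z\in\cN\), and \(\cN=M\) is compact and definable, which is condition (i).

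For condition (ii), the restriction of \(\widetilde S^{-1}\) to \(\cN\) is a continuous definable extension of \(S^{-1}|_{\cN^\circ}\). It maps into \(X\), since its values lie in \(\prod_i\cl\mathcal I_i\subset\overline C\) and satisfy \(Ay=b\). For surjectivity, take \(y\in X\). The segment construction gives \(y_t\in X^\circ\) with \(y_t\to y\). The points \(S(y_t)\) lie in the compact set \(\cN\), so a subsequence converges to some \(z\in\cN\), and continuity gives \(\widetilde S^{-1}(z)=\lim y_t=y\).
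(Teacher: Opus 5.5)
Your proof is correct and follows essentially the same route as the paper. Both arguments use the coordinatewise extension on the compact box $\prod_i\cl\mathcal J_i$, closedness to get $\cN\subseteq\{z:AS^{-1}(z)=b\}$, and the segment $(1-t)y+tx^\circ\in X^\circ$ for the reverse inclusion. The main difference is that the paper observes the extended $S_i^{-1}$ is a homeomorphism $\cl\mathcal J_i\to\cl\mathcal I_i$ (onto by density and compactness, injective by strict monotonicity), so contrary to your remark it can be inverted globally. That inversion gives $S(x_t)\to S(x)=z$ and surjectivity immediately, replacing your endpoint case analysis and subsequence argument.
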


\begin{proof}
For each \(i\), the map
\(S_i^{-1}:\mathcal J_i\to\mathcal I_i\) is a strictly increasing bijection.  Density and compactness show that its continuous extension is
onto \(\cl\mathcal I_i\), while strict monotonicity gives injectivity.  It is
therefore a homeomorphism from \(\cl\mathcal J_i\) to
\(\cl\mathcal I_i\), and its inverse continuously and definably extends
\(S_i\).  Taking products yields continuous definable extensions of \(S^{-1}\)
and \(S\), for which we retain the same notation.  Since \(X^\circ=C\cap L\)
is semialgebraic and \(S\) is definable, \(S(X^\circ)\) is definable.  The
closure of a definable set is definable, so \(\cN=\cl S(X^\circ)\) is a
definable closed subset of the compact set
\(\prod_i\cl\mathcal J_i\).  To verify
Definition~\ref{def:boundary-extension}(ii), fix \(z\in\cN\) and choose
\(x^\nu\in X^\circ\) with \(S(x^\nu)\to z\).  Continuity gives
\(x^\nu\to S^{-1}(z)\), and hence \(S^{-1}(z)\in X\) because \(X\) is closed.
Thus \(S^{-1}|_{\cN^\circ}\) extends to a continuous definable map
\(\cN\to X\), so \(S\) admits a continuous definable boundary extension.

It remains to identify \(\cN\).  Set
\[
 \widetilde{\cN}:=\left\{z\in\prod_{i=1}^n\cl\mathcal J_i:
 A S^{-1}(z)=b\right\}.
\]
The inclusion \(\cN\subseteq\widetilde{\cN}\) follows from
\(S^{-1}(\cN)\subseteq X\).  Conversely, fix \(z\in\widetilde{\cN}\) and set
\(x:=S^{-1}(z)\).  Then \(x_i\in\cl\mathcal I_i\subseteq\cl I_i\) and
\(Ax=b\), so \(x\in X\).  For any \(\widehat x\in X^\circ\), the points
\(x_t:=(1-t)x+t\widehat x\) belong to \(X^\circ\) for \(t\in(0,1]\), and
continuity gives \(S(x_t)\to S(x)=z\) as $t\downarrow 0$.  Hence
\(z\in\cl S(X^\circ)=\cN\).  Applying the same
argument to any \(x\in X\) gives \(S(x)\in\cN\) and
\(S^{-1}(S(x))=x\), so the extended inverse is onto.
\end{proof}

The following proposition provides examples of definable boundary extensions.

\begin{proposition}\label{prop:kernel-boundary-extensions}
The following settings satisfy the definable boundary-extension condition.
In each case, \(h_i=h\) for every \(i\), so
\(\phi(x)=\sum_{i=1}^n h(x_i)\).
\begin{enumerate}[label=(\roman*)]
 \item Shannon entropy \(h(t)=t(\log t-1)\) for \(t\in (0,\infty)\), with the
       compact feasible set \(X=\{x\geq0:Ax=b\}\).
 \item Fermi--Dirac entropy
       \(h(t)=t\log t+(1-t)\log(1-t)\) for \(t\in (0,1)\), with
       \(X=\{x\in[0,1]^n:Ax=b\}\).
 \item For any fixed \(p\in(1,2)\), the power kernel
       \(h=h_p\), where
       \(h_p(t)=-t^{2-p}/[(p-1)(2-p)]\) for \(t\in (0,\infty)\), with the compact
       feasible set \(X=\{x\geq0:Ax=b\}\).
\end{enumerate}
\end{proposition}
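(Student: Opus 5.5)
The plan is to check the hypotheses of Lemma~\ref{lem:boundary-extension-criterion} coordinate by coordinate. For each \(i\) we need two facts. First, \(\cl\mathcal J_i\) is compact, which holds once \(S_i\) is bounded on the bounded interval \(\mathcal I_i\). Second, \(S_i^{-1}\) extends continuously and definably to \(\cl\mathcal J_i\).

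In all three cases \(X\) is compact, so each \(\mathcal I_i\) is a bounded interval contained in \(I_i\). Its closure lies in \([0,M]\) for cases (i) and (iii) and in \([0,1]\) for case (ii). Hence everything reduces to one-dimensional facts about \(S\) for the single function \(h\), namely \(S(t)=\int_{\bar t}^t\sqrt{h''(u)}\dd u\):
\begin{itemize}
\item \(S\) is integrable up to the finite endpoints of \(I\);
\item the extension of \(S\) to \(\cl I\cap\R\) is a strictly increasing homeomorphism onto its image;
\item this extension is definable in a fixed o-minimal structure, which I take to be \(\R_{\mathrm{an},\exp}\).
\end{itemize}
Once these hold, \(S_i^{-1}\) on \(\mathcal J_i\) is the restriction of the inverse homeomorphism, so it extends continuously to \(\cl\mathcal J_i\). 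Its extension is definable because the inverse of a definable bijection is definable (its graph is the transposed graph).

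For the explicit computations, I take \(\bar t=1\) in (i) and (iii), and \(\bar t=1/2\) in (ii).
\begin{itemize}
\item Case (i): \(h''(t)=1/t\), so \(S(t)=2(\sqrt t-1)\). This extends to \(t=0\) with value \(-2\), and \(S^{-1}(z)=(1+z/2)^2\) is semialgebraic on \([-2,\infty)\).
\item Case (iii): \(h_p''(t)=t^{-p}\), so \(S(t)=\frac{2}{2-p}(t^{1-p/2}-1)\), which is finite at \(0\) because \(p<2\). The inverse \(S^{-1}(z)=\bigl(1+\frac{2-p}{2}z\bigr)^{2/(2-p)}\) is a real power function on a compact interval. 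Power functions with real exponents are definable in \(\R_{\mathrm{an},\exp}\), being \(\exp(r\log s)\) on \((0,\infty)\), and they extend continuously by \(0\) at the origin.
\item Case (ii): \(h''(t)=1/(t(1-t))\), so \(S(t)=\int_{1/2}^t\frac{\dd u}{\sqrt{u(1-u)}}=2\arcsin(\sqrt t)-\pi/2\). Here \(S\) maps \([0,1]\) homeomorphically onto \([-\pi/2,\pi/2]\), with inverse \(S^{-1}(z)=\sin^2\bigl(\frac{z+\pi/2}{2}\bigr)=\frac{1+\sin z}{2}\).
\end{itemize}
In case (ii), \(\sin\) restricted to the compact interval \([-\pi/2,\pi/2]\) is a restricted analytic function, so it is definable in \(\R_{\mathrm{an},\exp}\), and so is \(S^{-1}\). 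In every case the map is continuous and strictly increasing, and \(\cl\mathcal J_i=S(\cl\mathcal I_i)\) is a compact interval.

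Finally, Lemma~\ref{lem:boundary-extension-criterion} applies and yields the definable boundary extension, with \(\cN\) compact and definable. The only delicate point is definability in a single common structure. In particular, case (ii) needs the restricted sine and case (iii) needs irrational powers, and both are handled by working in \(\R_{\mathrm{an},\exp}\) and using compact domains. Definability of \(X^\circ\) is automatic because it is semialgebraic.
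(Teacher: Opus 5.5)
Your proposal is correct and follows the paper's route. Like the paper, you compute $S$ and $S^{-1}$ explicitly for each kernel, use compactness of $X$ to make each $\cl\mathcal J_i=S_i(\cl\mathcal I_i)$ a compact interval on which the inverse is continuous and definable in $\R_{\mathrm{an},\exp}$, and then invoke Lemma~\ref{lem:boundary-extension-criterion}. The only difference is cosmetic: you fix a base point $\bar t\in I$ literally as in \eqref{eq:S}, whereas the paper shifts the additive constant so that $S(0)=0$, and translations change neither compactness nor definability.
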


\begin{proof}
\emph{(i) Shannon entropy.}
We have \(h''(t)=1/t\), so \(S'(t)=1/\sqrt{t}\) for $t\in (0,\infty)$.  Choosing
the additive constant so that the continuous extension satisfies \(S(0)=0\)
gives
\[
 {S(t)=2\sqrt{t},\; t\in [0,\infty) \qquad S^{-1}(\zeta)=\frac{\zeta^2}{4},\; \zeta\in[0,\infty).}
\]
Since \(X=\cl X^\circ\) is compact, every \(\cl\mathcal I_i\) and
\(\cl\mathcal J_i=S_i(\cl\mathcal I_i)\) is compact.  Moreover,
\(S_i^{-1}(z_i)=z_i^2/4\) extends continuously and semialgebraically to
\(\cl\mathcal J_i\).  Lemma~\ref{lem:boundary-extension-criterion} therefore
yields a definable boundary extension, with
\[
 \cN=\{z\geq0:A(z\odot z/4)=b\}.
\]

\medskip
\noindent\emph{(ii) Fermi--Dirac entropy.}
Here \(h''(t)=1/(t(1-t))\), so
\(S'(t)=1/\sqrt{t(1-t)}\) for $t\in (0,1)$.  The normalization \(S(0)=0\) gives
\[
{
 S(t)=2\arcsin\sqrt{t},\; t\in [0,1] \qquad S^{-1}(\zeta)=\sin^2(\zeta/2),\;
 \zeta\in [0,\pi].}
\]
The set \(X=\cl X^\circ\) is compact, and the continuous extension of
\(S_i\) satisfies
\(\cl\mathcal J_i=S_i(\cl\mathcal I_i)\subseteq[0,\pi]\).  Thus
\(\cl\mathcal J_i\) is compact.  The inverse
\(S_i^{-1}(z_i)=\sin^2(z_i/2)\) extends continuously and definably to this
interval.  Lemma~\ref{lem:boundary-extension-criterion} therefore yields a
definable boundary extension.  With
\(S^{-1}(z)=\sin^2(z/2)\), we similarly obtain
\[
 \cN=\{z\in[0,\pi]^n:A\sin^2(z/2)=b\}.
\]

\medskip
\noindent\emph{(iii) Power kernels.}
Fix \(p\in(1,2)\).  Then
\(h_p''(t)=t^{-p}\) for $t\in (0,\infty)$, and
\(h_p'(t)=-t^{1-p}/(p-1)\to-\infty\) as \(t\downarrow0\), so \(h_p\) is a
Legendre kernel on \((0,\infty)\).  Since
\(S_p'(t)=t^{-p/2}\) and \(2-p>0\), the normalization \(S_p(0)=0\) gives
\[
{
 S_p(t)=\frac{2}{2-p}t^{(2-p)/2},\; t\in [0,\infty) \qquad
 S_p^{-1}(\zeta)=\left(\frac{2-p}{2}\zeta\right)^{2/(2-p)},\; 
 \zeta\in [0,\infty).}
\]
Since \(X=\cl X^\circ\) is compact, every \(\cl\mathcal I_i\) is compact.
The continuous extension of \(S_p\) satisfies
\(\cl\mathcal J_i=S_p(\cl\mathcal I_i)\), so \(\cl\mathcal J_i\) is compact.
The inverse
\(S_p^{-1}(z)=((2-p)z/2)^{2/(2-p)}\) also extends continuously and definably
to zero.  Lemma~\ref{lem:boundary-extension-criterion} therefore yields a
definable boundary extension.  With
\(S_p^{-1}(z)=((2-p)z/2)^{2/(2-p)}\), we similarly obtain
\[
 \cN=\left\{z\geq0:
 A\left(\frac{2-p}{2}z\right)^{2/(2-p)}=b\right\}.
\]
All sets and maps above are definable in the common o-minimal expansion
\(\R_{\mathrm{an},\exp}\).  Hence all conditions in
Definition~\ref{def:boundary-extension} hold in each case.
\end{proof}

\section{\texorpdfstring{Reparameterized finite length and KKT convergence}{Reparameterized finite length and KKT convergence}}\label{sec:circuit}

This section establishes finite length in the reparameterized sequence and KKT convergence of mirror descent and mirror flow.  We apply the KL inequality to the reparameterized objective to prove finite length and convergence of the reparameterized sequence or trajectory.  Continuity of the extended inverse \(S^{-1}\) then recovers convergence in the original sequence.  We first present the main results.  We then derive the estimates required for the proof, using a conformal circuit decomposition to obtain the key boundary-uniform estimate for the Lagrangian gradient, and prove the theorem.

\subsection{\texorpdfstring{Main convergence results}{Main convergence results}}

Suppose that the metric-flattening reparameterization \(S\) in~\eqref{eq:S}
admits a definable boundary extension in the sense of
Definition~\ref{def:boundary-extension}, and retain the reparameterized
objective \(E\) from~\eqref{eq:E}.
The analysis relies on the following curvature bound on \((1/h_i'')'\):
\begin{equation}\label{eq:kappa}
 \kappa_i:=
 \sup_{t\in\mathcal I_i}\abs{\left(\frac1{h_i''}\right)'(t)}
 =\sup_{t\in\mathcal I_i}\frac{\abs{h_i'''(t)}}{(h_i''(t))^2},
 \qquad
 \kappa:=\max_i\kappa_i.
\end{equation}
Now, we are ready to present the main result.

\begin{theorem}[Finite length of \(S(x_k)\) and KKT convergence]
\label{thm:main}
Let each \(h_i\) be a \(C^3\) Legendre kernel on \(I_i\) with
\(h_i''>0\), and assume that \(\kappa<\infty\) and that the associated
metric-flattening reparameterization \(S\) admits a definable boundary extension.
Let \(f\) be \(C^1\) on a neighborhood of \(X\) and \(L\)-smooth relative to
\(\phi\) on \(X^\circ\) for some \(L\geq0\), with \(f|_X\) definable in the
same o-minimal structure.  Starting from \(x_0\in X^\circ\), let
\(x_{k+1}:=T_{\alpha_k}(x_k)\), where \(0<\alpha_k\leq\bar\alpha<\infty\),
\(\bar\alpha L<1\), and \(\sum_{k=0}^\infty\alpha_k=\infty\).  Then
\begin{equation}\label{eq:main-length}
 \sum_{k=0}^\infty\norm{S(x_{k+1})-S(x_k)}<\infty.
\end{equation}
Consequently, \(S(x_k)\to z_\star\in\cN\) and
\(x_k\to x_\star:=S^{-1}(z_\star)\in X\), with
\(0\in\partial E(z_\star)\).  Moreover, \(f(x_k)\) is nonincreasing and
converges to \(f(x_\star)\).  The limit \(x_\star\) is a KKT point of
\eqref{eq:problem}.
\end{theorem}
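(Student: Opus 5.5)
The plan is to run the standard KL finite-length argument (Attouch--Bolte--Svaiter style) on the sequence \(z_k:=S(x_k)\in\cN^\circ\) with the reparameterized objective \(E\) from \eqref{eq:E}. \(E\) is definable, and it is lower semicontinuous (indeed continuous on the compact set \(\cN\)), because \(f|_X\) is definable and continuous and \(S^{-1}\) extends continuously and definably. Hence \(E\) has the KL property at every point of \(\cN\). The argument needs two inequalities along the iterates, stated in the reparameterized variables.

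\textbf{(H1) Sufficient decrease.} The target is \(E(z_k)-E(z_{k+1})\geq \frac{c}{\alpha_k}\norm{z_{k+1}-z_k}^2\). Lemma~\ref{lem:bregman-decrease} gives \(f(x_k)-f(x_{k+1})\geq \alpha_k^{-1}(1-\bar\alpha L)D_\phi(x_{k+1},x_k)+\alpha_k^{-1}D_\phi(x_k,x_{k+1})\). The symmetrized Bregman sum \(D_\phi(x^+,x)+D_\phi(x,x^+)=\sum_i (h_i'(x_i^+)-h_i'(x_i))(x_i^+-x_i)\) must then be compared with \(\sum_i(S_i(x_i^+)-S_i(x_i))^2\). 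Writing \(\int_{x_i}^{x_i^+}h_i''\) and \(\int\sqrt{h_i''}\), Cauchy--Schwarz gives \((\int\sqrt{h''})^2\leq \abs{\Delta x}\int h''\), which is exactly the needed bound with \(c=\min(1,1-\bar\alpha L)\cdot\) const. Monotonicity of \(f(x_k)\) follows immediately.

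\textbf{(H2) Relative error.} The target is \(\dist(0,\partial E(z_{k+1}))\leq \frac{C}{\alpha_k}\norm{z_{k+1}-z_k}+(\text{small})\). By \eqref{eq:slope} with \(\mu=\lambda_{k+1}\), it suffices to bound \(\norm{D(S^{-1})(z_{k+1})^\transpose g_{k+1}}\), where \(g_{k+1}:=\nabla f(x_{k+1})+A^\transpose\lambda_{k+1}\). Using \eqref{eq:intro-dual-increment}, write \(g_{k+1}=-\alpha_k^{-1}(\nabla\phi(x_{k+1})-\nabla\phi(x_k))+(\nabla f(x_{k+1})-\nabla f(x_k))\). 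Coordinatewise, \((h_i'(x_i^+)-h_i'(x_i))/\sqrt{h_i''(x_i^+)}\) must be compared with \(S_i(x_i^+)-S_i(x_i)\). Changing variables \(\zeta=S_i(u)\) gives \(h_i'(x^+)-h_i'(x)=\int \sqrt{h_i''(S_i^{-1}\zeta)}\,\dd\zeta\), so the ratio is controlled by how much \(\log\sqrt{h_i''}\) varies along \(\zeta\). Its \(\zeta\)-derivative equals \(h_i'''/(2(h_i'')^{3/2})\), which is unbounded, so this direct control fails near the boundary. This is where \(\kappa<\infty\) and the uniform bound \(\norm{\nabla f(x_k)+A^\transpose\lambda_{k+1}}_\infty\leq\Gamma\) enter. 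The quantity to handle is \(1/\sqrt{h_i''}\) as a function of \(\zeta\): its derivative is \(-\tfrac12 (1/h_i'')'\), bounded by \(\kappa/2\). Hence \(D(S^{-1})\) is \(\kappa/2\)-Lipschitz in \(z\). Accordingly, split \(D(S^{-1})(z_{k+1})^\transpose g=D(S^{-1})(z_k)^\transpose g+[D(S^{-1})(z_{k+1})-D(S^{-1})(z_k)]g\). The second term is \(\leq \frac{\kappa}{2}\Gamma'\norm{z_{k+1}-z_k}\) once \(g\) is uniformly bounded. For the first term, with \(g=\nabla f(x_k)+A^\transpose\lambda_{k+1}\), the mean-value comparison of \(h'\)-differences against \(S\)-differences, weighted by \(1/\sqrt{h''}\) at one endpoint, again follows from Lipschitzness of \(1/\sqrt{h''}\circ S^{-1}\) combined with the bound \(\abs{\Delta h_i'}\leq\alpha_k\Gamma\). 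Together these yield \(\dist(0,\partial E(z_k))\lesssim(\alpha_k^{-1}+1)\norm{z_{k+1}-z_k}\) (evaluating at \(z_k\), with multiplier \(\lambda_{k+1}\)).

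\textbf{Main obstacle.} The main obstacle is the boundary-uniform bound \(\sup_k\norm{\nabla f(x_k)+A^\transpose\lambda_{k+1}}_\infty<\infty\), since \(\lambda_{k+1}\) is only determined through \eqref{eq:intro-dual-increment}. The approach is to decompose the dual increment \(\nabla\phi(x_{k+1})-\nabla\phi(x_k)\), or the primal step \(x_{k+1}-x_k\in\ker A\), conformally into circuits of \(\ker A\), of which there are finitely many. The dual vector \(w=\alpha_k^{-1}(\nabla\phi(x_{k+1})-\nabla\phi(x_k))\) has the same sign pattern as \(x_{k+1}-x_k\) by monotonicity of \(h_i'\). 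It also lies in \(-\nabla f(x_k)+\range A^\transpose\). Pairing \(w\) with each conformal circuit \(c\) gives \(\ip{w}{c}=-\ip{\nabla f(x_k)}{c}\), which is bounded by \(\max\norm{\nabla f}\) on compact \(X\). Conformality makes each term \(w_ic_i\) nonnegative, so \(\abs{w_i}\) is bounded on the support of the step. Off the support, one chooses \(\lambda\) via the circuit structure (Hoffman-type bound). For the KL step itself, the usual telescoping \(\vartheta(E_k)-\vartheta(E_{k+1})\geq \vartheta'(E_k)(E_k-E_{k+1})\), combined with (H1), (H2) and \(ab\leq\) AM-GM, gives \(\sum\norm{z_{k+1}-z_k}<\infty\). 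Because the factors \(\alpha_k^{-1}\) cancel between (H1) and (H2), and \(\alpha_k\leq\bar\alpha\) controls the \(+1\) term, this works with diminishing stepsizes; it is carried out on a uniformized KL neighborhood of the compact cluster set. Finally, \(z_k\to z_\star\) and continuity of \(S^{-1}\) give \(x_k\to x_\star\). Since \(\sum\alpha_k=\infty\), \cite[Proposition~5.3]{dingtoh2025interior} gives that \(x_\star\) is KKT, and \(0\in\partial E(z_\star)\) follows from closedness of \(\partial E\) along \(\dist(0,\partial E(z_k))\to0\) on a subsequence.
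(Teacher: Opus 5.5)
Your proposal is correct and follows essentially the paper's route. The common steps are: sufficient decrease from Lemma~\ref{lem:bregman-decrease} plus the Cauchy--Schwarz coercivity \(D_\phi(y,x)+D_\phi(x,y)\geq\norm{S(y)-S(x)}^2\); a relative-error bound at \(z_k\) with multiplier \(\lambda_{k+1}\), built from the conformal-circuit bound on \(\norm{\nabla f(x_k)+A^\transpose\lambda_{k+1}}_\infty\) and the \(\kappa\)-comparison (your \(\kappa/2\)-Lipschitz bound on \(1/\sqrt{h_i''}\circ S_i^{-1}\) is equivalent to the paper's \(\abs{(\log h_i'')'}\leq\kappa h_i''\)); a KL length argument tolerating diminishing steps; and \cite[Proposition~5.3]{dingtoh2025interior} for KKT. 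The \(z_{k+1}\) detour, the \(+1\) term, AM--GM and the Hoffman-type remark are unnecessary, since the Lagrangian gradient vanishes off \(\supp(x_{k+1}-x_k)\) and \(\lambda_{k+1}\) is not a free choice. However, the subsequence with \(\dist(0,\partial E(z_k))\to0\) must be justified, as in the paper, by \(\sum_k\alpha_k\dist(0,\partial E(z_k))^2<\infty\) together with \(\sum_k\alpha_k=\infty\).
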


The proof is given in Section~\ref{sec:discrete-proof}.  It first derives the
one-step estimates required for the reparameterized sequence and then applies
the standard KL finite-length argument
\cite{bolte2007lojasiewicz,attouch2013convergence}.

\begin{corollary}[Convergence rates]\label{cor:rates}
Under the assumptions of Theorem~\ref{thm:main}, suppose in addition that
\(E\) has KL exponent \(\theta\in[0,1)\) at \(z_\star\).  Then there exist an
index \(k_0\) and a constant \(\gamma_0>0\) such that, with
\(\tau_k:=\sum_{j=k_0}^{k-1}\alpha_j\) and \(r_k:=f(x_k)-f(x_\star)\), the
following conclusions hold.
\begin{enumerate}[label=(\roman*)]
 \item If \(0\leq\theta<1/2\), there exists \(k_1\geq k_0\) such that
 \(x_k=x_\star\) for every \(k\geq k_1\).
 \item If \(\theta=1/2\), there is \(M>0\) such that, for all sufficiently
 large \(k\),
 \[
   r_k\leq Me^{-\gamma_0\tau_k},\qquad
   \norm{S(x_k)-z_\star}\leq Me^{-\gamma_0\tau_k/2}.
 \]
 \item If \(1/2<\theta<1\), there is \(M>0\) such that, for all sufficiently
 large \(k\),
 \[
   r_k\leq M\tau_k^{-1/(2\theta-1)},\qquad
   \norm{S(x_k)-z_\star}
   \leq M\tau_k^{-(1-\theta)/(2\theta-1)}.
 \]
\end{enumerate}
\end{corollary}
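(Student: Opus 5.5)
The plan is to run the standard KL rate analysis on the scalar sequence \(r_k\), using the two one-step inequalities that the proof of Theorem~\ref{thm:main} produces along the reparameterized sequence \(z_k:=S(x_k)\). \textbf{Sufficient decrease:} \(r_k-r_{k+1}\geq a\,\alpha_k^{-1}\norm{z_{k+1}-z_k}^2\) for some \(a>0\). I expect to obtain this from Lemma~\ref{lem:bregman-decrease}, using \(1/\alpha_k-L\geq 1/\bar\alpha-L>0\) and a local comparison \(D_\phi(x^+,x)+D_\phi(x,x^+)\gtrsim\norm{S(x^+)-S(x)}^2\) supplied by the flattening identity~\eqref{eq:flatten} and the curvature bound \(\kappa\). \textbf{Relative error:} \(\dist(0,\partial E(z_{k}))\leq b\,\alpha_k^{-1}\norm{z_{k+1}-z_k}\) for \(k\) large, or the same with a one-index shift. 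This comes from~\eqref{eq:slope} with \(\mu=\lambda_{k+1}\), the mirror optimality condition~\eqref{eq:intro-dual-increment}, the uniform Lagrangian bound \(\Gamma\), and \(\kappa<\infty\).

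\textbf{Step 1: a recursion in \(r_k\).} By Theorem~\ref{thm:main}, \(z_k\to z_\star\) and \(r_k\downarrow0\). Hence for \(k\geq k_0\) the iterates lie in the KL neighborhood. If \(r_k=0\) for some \(k\), the decrease inequality forces \(z_{j+1}=z_j\) from then on, so the sequence is stationary. Otherwise the KL exponent inequality gives \(c\,r_k^{\theta}\leq\dist(0,\partial E(z_k))\leq b\alpha_k^{-1}\norm{z_{k+1}-z_k}\). Squaring and combining with sufficient decrease yields
\[
r_k-r_{k+1}\geq \frac{a}{b^2}\,\alpha_k\,c^2 r_k^{2\theta}=: \gamma\,\alpha_k r_k^{2\theta}.
\]
Choose \(\gamma_0\) from \(\gamma\).

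\textbf{Step 2: the three cases.} Here I follow Attouch--Bolte, with the time variable \(\tau_k\) replacing \(k\).
\begin{itemize}
\item[(ii)] For \(\theta=1/2\), the recursion gives \(r_{k+1}\leq(1-\gamma\alpha_k)r_k\leq e^{-\gamma\alpha_k}r_k\), so \(r_k\leq r_{k_0}e^{-\gamma\tau_k}\). Since \(\alpha_k\leq\bar\alpha\), one can shrink \(\gamma\) so that \(\gamma\bar\alpha<1\).
\item[(iii)] For \(\theta>1/2\), compare with the ODE via \(r_{k+1}^{1-2\theta}-r_k^{1-2\theta}\geq c'\alpha_k\). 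This uses \(r_{k+1}\leq r_k\) and the mean value theorem on \(s\mapsto s^{1-2\theta}\), with the usual two-subcase split on whether \(r_{k+1}^{2\theta}\geq r_k^{2\theta}/2\). Summing gives \(r_k\leq M\tau_k^{-1/(2\theta-1)}\).
\item[(i)] For \(\theta<1/2\), eventually \(r_k^{2\theta}\geq r_k\), so \(r_k-r_{k+1}\geq\gamma\alpha_k r_k^{2\theta}\). If \(r_k>0\) for all \(k\), this gives \(r_k^{2\theta}\sum_j\alpha_j\)-type decrease: each step removes at least \(\gamma\alpha_k\) times a quantity bounded below by \(r_k^{2\theta}\). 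Since \(r_k\) is bounded, this forces \(\gamma\sum_{j\geq k}\alpha_j r_j^{2\theta-1}\cdot r_j\leq r_k\). As \(r_j^{2\theta-1}\to\infty\), nonsummability of \(\alpha_k\) gives a contradiction. More cleanly, apply the \(\theta=1/2\) argument with the rate \(r_j^{2\theta-1}\to\infty\), which yields \(r_k\leq r_{k_0}\prod(1-\gamma\alpha_j r_j^{2\theta-1})\). Hence \(r_k\) vanishes in finite time, and then \(x_k=x_\star\) by the remark in Step 1, together with injectivity of \(S\) on interior iterates.
\end{itemize}

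\textbf{Step 3: distance rates.} The standard finite-length estimate from the KL argument gives
\[
\sum_{j\geq k}\norm{z_{j+1}-z_j}\leq C\bigl(\vartheta(r_k)+\sqrt{r_k-r_{k+1}}\bigr)\lesssim r_k^{1-\theta}+\sqrt{r_k}.
\]
For \(\theta\geq1/2\), the dominant term is \(r_k^{1-\theta}\). Then \(\norm{z_k-z_\star}\) is bounded by this tail sum, which gives exponents \(1/2\) and \((1-\theta)/(2\theta-1)\) respectively.

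\textbf{Main obstacle.} The main obstacle is making the relative-error inequality hold with \(\alpha_k^{-1}\) rather than a constant when \(\alpha_k\to0\). The \(\alpha_k\) factors must cancel correctly in the recursion, and the uniform bound \(\Gamma\) must be used to control the mismatch between \(\nabla f(x_k)\) at \(z_k\) and the Jacobian at \(z_{k+1}\) through \(\kappa\). I would take this directly from the one-step estimates in Section~\ref{sec:discrete-proof}.
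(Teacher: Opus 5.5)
Your overall route is the paper's. You derive $r_k-r_{k+1}\ge\gamma\alpha_k r_k^{2\theta}$ from sufficient decrease and the relative-error bound anchored at $z_k$ (Proposition~\ref{prop:relative-error} with $\mu=\lambda_{k+1}$), treat the three regimes, and bound $\norm{z_k-z_\star}$ by the KL tail sum, which is $\lesssim r_k^{1-\theta}$. Cases (ii) and (iii) and your Step~3 are fine. The two-subcase split in (iii) and the extra $\sqrt{r_k-r_{k+1}}$ term are not needed: since the recursion involves $r_k^{2\theta}$, you have directly $\int_{r_{k+1}}^{r_k}t^{-2\theta}\,\dd t\ge r_k^{-2\theta}(r_k-r_{k+1})$, and the KL chain gives $\sum_{j\ge k}\norm{z_{j+1}-z_j}\le(c_2/c_1)\vartheta(r_k)$.

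\textbf{Gap in (i).} The finite-termination argument fails in exactly the diminishing-stepsize regime the corollary covers. First, the inequality $\gamma\sum_{j\ge k}\alpha_j r_j^{2\theta}\le r_k$ does not, by the reasoning you give, contradict $\sum_j\alpha_j=\infty$. For $\theta>0$ the weights $r_j^{2\theta}$ tend to $0$, and rewriting them as $r_j^{2\theta-1}\cdot r_j$ does not help, because the blowing-up factor is multiplied by $r_j\to0$. Second, your product bound $r_k\le r_{k_0}\prod_j(1-\gamma\alpha_j r_j^{2\theta-1})$ forces $r_{j+1}=0$ only if some factor becomes nonpositive, i.e.\ $\gamma\alpha_j r_j^{2\theta-1}\ge1$. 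That follows from $r_j\to0$ when $\alpha_j$ is bounded away from zero, but not when $\alpha_j\to0$. On its own, the product bound yields only decay faster than $e^{-M\tau_k}$ for every $M$, not $r_k=0$ at a finite index. So the claim that $r_k$ vanishes in finite time is unsupported as written.

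\textbf{The missing step.} Use the same discrete ODE comparison as in (iii), now with the positive exponent $1-2\theta$. Since $t\mapsto t^{-2\theta}$ is nonincreasing on $(0,\infty)$,
\[
 r_k^{1-2\theta}-r_{k+1}^{1-2\theta}=(1-2\theta)\int_{r_{k+1}}^{r_k}t^{-2\theta}\,\dd t\ge(1-2\theta)r_k^{-2\theta}(r_k-r_{k+1})\ge(1-2\theta)\gamma\alpha_k .
\]
If all $r_k>0$, summing from $k_0$ gives $r_{k_0}^{1-2\theta}\ge(1-2\theta)\gamma\tau_k\to\infty$, a contradiction. Your Step~1 remark then gives $z_k=z_\star$ and $x_k=x_\star$ from that index on. This is exactly the paper's argument.

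\textbf{Smaller slip.} The bound $1/\alpha_k-L\ge1/\bar\alpha-L$ only yields a decrease constant independent of $\alpha_k$, which would put $\alpha_k^2$ rather than $\alpha_k$ into the recursion. You need $1/\alpha_k-L=(1-\alpha_kL)/\alpha_k\ge(1-\bar\alpha L)/\alpha_k$. Also, the comparison $D_\phi(x^+,x)+D_\phi(x,x^+)\ge\norm{S(x^+)-S(x)}^2$ holds globally (Proposition~\ref{prop:coercivity}, by Cauchy--Schwarz) and does not need $\kappa$.
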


% {\color{blue}Unlike the standard proximal/PALM KL classification based on a
% next-iterate relative-error estimate, finite termination here holds throughout
% \(0\leq\theta<1/2\).  The reason is that our current-iterate relative-error
% estimate yields \(r_k-r_{k+1}\geq\gamma_0\alpha_k r_k^{2\theta}\), whereas the
% standard argument gives a recurrence involving \(r_{k+1}^{2\theta}\).}

The proof is given in Appendix~\ref{app:KL}. For mirror flow, the same reparameterization yields finite length.

\begin{theorem}\label{thm:flow}
Suppose each \(h_i\) is a \(C^3\) Legendre kernel on \(I_i\) with
\(h_i''>0\), and suppose that the associated metric-flattening
reparameterization \(S\) admits a definable boundary extension.  Let \(f\) be \(C^2\) on a neighborhood of
\(X\), and suppose that \(f|_X\) is definable in the same o-minimal structure.
For every \(x(0)\in X^\circ\), the mirror flow \eqref{eq:intro-flow} has a
unique global solution in \(X^\circ\).  Its reparameterized trajectory
\(z(t)=S(x(t))\) satisfies
\begin{equation}\label{eq:flow-length}
 \int_0^\infty\norm{\dot z(t)}\,\dd t<\infty.
\end{equation}
Consequently, \(z(t)\to z_\star\in\cN\) and
\(x(t)\to x_\star=S^{-1}(z_\star)\in X\), with
\(0\in\partial E(z_\star)\).  The limit \(x_\star\) is a KKT point of
\eqref{eq:problem}.
\end{theorem}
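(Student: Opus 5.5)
We prove Theorem~\ref{thm:flow} by working entirely in the reparameterized variable \(z=S(x)\), where the mirror flow becomes the subgradient flow~\eqref{eq:subgradient-flow} of \(E\). The argument then runs the standard continuous-time KL length estimate.

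\textbf{Step 1: existence and uniqueness.} Since \(f\in C^2\) and \(\nabla^2\phi\) is \(C^1\) and positive definite on \(C\), we eliminate the multiplier. We get \(\lambda(x)=-(AH^{-1}A^\transpose)^{-1}AH^{-1}\nabla f(x)\) with \(H=\nabla^2\phi(x)\). The flow becomes the locally Lipschitz ODE \(\dot x=-P(x)\nabla f(x)\), where
\[
P=H^{-1}-H^{-1}A^\transpose(AH^{-1}A^\transpose)^{-1}AH^{-1}.
\]
This ODE lives on the open set \(X^\circ\) relative to \(L\), so Picard--Lindel\"of gives a unique maximal solution on \([0,T_{\max})\). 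To exclude finite-time exit, we use the dual variable \(y(t)=\nabla\phi(x(t))\). Its velocity satisfies \(\dot y=-(\nabla f(x)+A^\transpose\lambda)\). Boundedness of \(\nabla f\) on the compact set \(X\) is not directly enough, because \(\lambda\) could blow up near the boundary.

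I would handle this with the boundary-uniform bound on the Lagrangian gradient developed in this section, namely the conformal-circuit estimate \(\norm{\nabla f(x)+A^\transpose\lambda(x)}_\infty\le\Gamma\) uniformly on \(X^\circ\). This is the continuous analogue of \(\sup_k\norm{\nabla f(x_k)+A^\transpose\lambda_{k+1}}_\infty<\infty\). With it, \(\dot y\) is bounded, so \(y(t)\) stays bounded on any finite interval. Essential smoothness then keeps \(x(t)\) in a compact subset of \(C\), hence in \(X^\circ\), and so \(T_{\max}=\infty\). Establishing this uniform bound is the \textbf{main obstacle}. If I had to prove it from scratch, I would decompose \(v:=\nabla f+A^\transpose\lambda\in\range(H P H)\)-type vectors. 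Concretely, \(H^{-1}v\) lies in \(\ker A\); write it as a conformal sum of circuits of \(\ker A\). Each circuit \(c\) then gives
\[
\ip{c}{v}=\ip{c}{\nabla f},
\]
since \(Ac=0\), and conformality transfers these bounds to the coordinates of \(v\) on \(\supp(H^{-1}v)\).

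\textbf{Step 2: descent and length.} Set \(z(t)=S(x(t))\). By the computation preceding~\eqref{eq:subgradient-flow}, \(\dot z=-D(S^{-1})(z)^\transpose(\nabla f+A^\transpose\lambda)\). Because \(\dot z\) is tangent to \(\cN^\circ\) and the multiplier is the minimizing choice, \(-\dot z\) is the minimal-norm element of \(\partial E(z)\) in~\eqref{eq:subdiff}. By the chain rule,
\[
\frac{\dd}{\dd t}E(z(t))=\ip{\nabla f(x)}{\dot x}=-\norm{\dot z}^2=-\dist(0,\partial E(z))^2 .
\]
Hence \(E(z(t))=f(x(t))\) is nonincreasing, and it is bounded below by compactness of \(X\), so it converges to some \(f_\infty\). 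Let \(\Omega\) be the set of cluster points of \(z(t)\) as \(t\to\infty\). This set is nonempty and compact because \(\cN\) is compact, it is connected, and \(E\) (which is continuous on \(\cN\) by continuity of \(S^{-1}\) and \(f\)) equals \(f_\infty\) on it. \(E\) is definable, since \(\cN\), \(S^{-1}\) on \(\cN\), and \(f|_X\) are definable, and it is lower semicontinuous. It therefore has the KL property, and the uniformized KL lemma on \(\Omega\) applies~\cite{attouch2013convergence}.

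If \(E(z(t_0))=f_\infty\) for some \(t_0\), then \(\dot z\equiv0\) afterwards and we are done. Otherwise, for large \(t\),
\[
-\frac{\dd}{\dd t}\vartheta(E(z)-f_\infty)=\vartheta'\norm{\dot z}^2\ge\norm{\dot z}
\]
by KL, since \(\dist(0,\partial E(z))=\norm{\dot z}\). Integrating gives~\eqref{eq:flow-length}.

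\textbf{Step 3: conclusions.} Finite length gives \(z(t)\to z_\star\in\cN\), and continuity of the extended \(S^{-1}\) gives \(x(t)\to x_\star\). From \(\int_0^\infty\norm{\dot z}^2<\infty\) we get \(\liminf_{t\to\infty}\dist(0,\partial E(z(t)))=0\) along a sequence \(t_\nu\to\infty\). Since \(E(z(t_\nu))\to E(z_\star)\), closedness of the limiting subdifferential gives \(0\in\partial E(z_\star)\). For KKT stationarity of \(x_\star\), I would invoke~\cite[Proposition~5.3]{dingtoh2025interior} in its flow version: convergent mirror-flow trajectories have KKT limits. Alternatively, we can argue directly. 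If \(\nabla f(x_\star)+A^\transpose\mu+N_{\overline C}(x_\star)\not\ni0\) for all \(\mu\), then for some \(j\) with \(x_{\star,j}\) on the boundary, \(\dot y_j\) would eventually have a fixed sign pushing \(y_j=h_j'(x_j)\) away from \(\mp\infty\). Integration over infinite time then contradicts \(x_j(t)\to\) the boundary, exactly as in the discrete case with \(\sum\alpha_k=\infty\).
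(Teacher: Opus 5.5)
Your Steps 1--2 and the convergence part of Step 3 are sound. For global existence you take a different route from the paper, which simply cites \cite[Proposition~3.1]{dingtoh2025interior}: you give a continuous-time version of the circuit bound. Since \(-\dot x=H^{-1}v\in\ker A\) and \(H\) is diagonal positive, \(H^{-1}v\) and \(v\) share a sign pattern, so the conformal decomposition gives \(\norm{\nabla f(x)+A^\transpose\lambda(x)}_\infty\le\Gamma\) and hence \(\abs{y_i(t)-y_i(0)}\le\Gamma t\). This makes the argument self-contained and reuses the paper's key lemma. Your uniformized KL lemma on the \(\omega\)-limit set is an acceptable substitute for the paper's single-point KL plus trapping argument.

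The gap is in the KKT step. Your direct argument is essentially coordinatewise, and it breaks once \(m\ge1\). For an active index \(j\), \(\dot y_j(t)=-[\nabla f(x(t))+A^\transpose\lambda(t)]_j\) involves the time-dependent multiplier \(\lambda(t)\), which need not converge or stay bounded as \(x(t)\) approaches the boundary. This is exactly why the paper bounds only the Lagrangian gradient, not \(\lambda\). Failure of KKT at \(x_\star\) is a statement about \(\nabla f(x_\star)+A^\transpose\mu\) for every fixed \(\mu\), and it forces no eventual sign on any individual \(\dot y_j\). The bound \(\abs{\dot y_j}\le\Gamma\) controls magnitude, not sign. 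Your other option, a ``flow version'' of \cite[Proposition~5.3]{dingtoh2025interior}, relies on a result you have not checked. The paper uses that proposition only for discrete sequences and proves the flow case from scratch, so the KKT conclusion is not established by your proposal.

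The paper's fix runs as follows. Integrate the flow and divide by \(t\) to get \((\nabla\phi(x(t))-\nabla\phi(x(0)))/t+\frac1t\int_0^t\nabla f(x(s))\dd s\in\range(A^\transpose)\). The constraint qualification \(N_{\overline C}(x_\star)\cap\range(A^\transpose)=\{0\}\), which follows from \(C\cap L\neq\varnothing\), shows that \(\norm{\nabla\phi(x(t))}/t\) stays bounded. Then pass to the limit along normalized dual directions, whose cluster points lie in \(N_{\overline C}(x_\star)\).

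Alternatively, you can repair your own idea by replacing a single coordinate with a direction that annihilates the multiplier.
\begin{itemize}
\item If KKT fails, then \(-\nabla f(x_\star)\) lies outside the polyhedral, hence closed, cone \(\range(A^\transpose)+N_{\overline C}(x_\star)\).
\item Separation then yields \(d\in\ker A\) with \(d_j\ge0\) when \(x_{\star,j}=\ell_j\), \(d_j\le0\) when \(x_{\star,j}=u_j\), and \(\ip{d}{\nabla f(x_\star)}<0\).
\item Then \(\frac{\dd}{\dd t}\ip{d}{y(t)}=-\ip{d}{\nabla f(x(t))}\) is eventually bounded below by a positive constant, so \(\ip{d}{y(t)}\to+\infty\).
\item This is impossible: \(y_i(t)\) converges on inactive coordinates, and \(d_jy_j(t)\le0\) eventually on active ones, because \(h_j'\to-\infty\) at \(\ell_j\) and \(h_j'\to+\infty\) at \(u_j\).
\end{itemize}
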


The proof is given in Appendix~\ref{app:flow}.

\subsection{\texorpdfstring{Proof of Theorem~\ref{thm:main}}{Proof of Theorem 4.1}}
\label{sec:discrete-proof}

We first derive boundary-uniform one-step estimates and then use
the KL property to prove finite length of the reparameterized sequence.

\subsubsection{\texorpdfstring{Conformal circuit decomposition}{Conformal circuit decomposition}}
\label{sec:conformal-circuits}

Direct bounds on the affine-constraint
multipliers or the dual sequence \(\{\nabla\phi(x_k)\}\) are generally
impossible near the boundary: the Legendre gradient blows up there, and the
multipliers may diverge as well.  We instead control the corresponding
Lagrangian gradient by decomposing the mirror-step displacement into conformal circuits
\cite{rockafellar1969elementary,muller2016elementary}, defined as follows.

\begin{definition}[A-circuit]
{A nonzero \(c\in\ker A\) is said to be} an \(A\)-\emph{circuit} if its support is
inclusion-minimal among supports of nonzero vectors in \(\ker A\)
(that is, no nonzero vector in \(\ker A\) has support strictly
contained in \(\supp(c)\)).
Two \(A\)-circuits are equivalent if they differ by a nonzero
scalar factor, and the resulting finite set of equivalence classes is denoted
by \(\cC(A)\).  For \(u,v\in\R^n\),
write \(u\sqsubseteq v\) if
\begin{equation}\label{eq:conformal}
 u_iv_i\geq0,\qquad
 u_i=0\ \text{whenever }v_i=0.
\end{equation}
\end{definition}

We use the following conformal decomposition.

\begin{lemma}[Conformal circuit decomposition
{\cite{rockafellar1969elementary,muller2016elementary}}]
\label{lem:conformal}
Every nonzero \(w\in\ker A\) can be written as
\[
 w=c^1+\cdots+c^q,\qquad
 q\leq\min\{\dim\ker A,\abs{\supp(w)}\},
\]
where each \(c^\ell\) is an \(A\)-circuit and
\(c^\ell\sqsubseteq w\).
\end{lemma}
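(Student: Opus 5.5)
We prove the conformal circuit decomposition by induction on \(\abs{\supp(w)}\), peeling off one conformal circuit at a time so that each step strictly shrinks the support. At each step we find an \(A\)-circuit \(c\) with \(c\sqsubseteq w\) and a scaling for which \(w-c\) stays conformal to \(w\) and has strictly smaller support. Then we recurse on \(w-c\), which is \(0\) or a nonzero element of \(\ker A\).

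\emph{Step 1: existence of a conformal circuit.} Fix nonzero \(w\in\ker A\) and let \(\sigma:=\mathrm{sign}(w)\). Consider the polyhedral cone
\[
 K:=\{v\in\ker A:\ \sigma_iv_i\geq0\ \forall i,\ v_i=0\ \forall i\notin\supp(w)\}.
\]
It contains \(w\ne0\) and is pointed, since it lies in a single closed orthant of the coordinates in \(\supp(w)\) and is zero elsewhere. Choose a nonzero \(c\in K\) whose support is inclusion-minimal among nonzero elements of \(K\); this exists because supports form a finite family. We claim \(c\) is an \(A\)-circuit. Suppose some nonzero \(d\in\ker A\) has \(\supp(d)\subsetneq\supp(c)\). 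Consider \(c-td\) for \(t\in\R\). Then \(\supp(c-td)\subseteq\supp(c)\), and the sign pattern of \(c-td\) agrees with \(\sigma\) on \(\supp(c)\) for small \(\abs t\). Increase \(\abs t\), with the sign of \(t\) chosen so that some coordinate \(c_i-td_i\), \(i\in\supp(d)\), moves toward zero, until the first such coordinate hits zero. The result is a nonzero element of \(K\): it is nonzero because coordinates in \(\supp(c)\setminus\supp(d)\neq\varnothing\) are untouched. Its support is strictly smaller than \(\supp(c)\), which contradicts minimality. Hence \(c\) is a circuit, and \(c\sqsubseteq w\) by construction.

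\emph{Step 2: peeling.} Let \(t^\ast:=\min\{w_i/c_i: i\in\supp(c)\}>0\); this is positive because \(c_i\) and \(w_i\) have the same sign. Set \(c^1:=t^\ast c\), which is still a circuit with \(c^1\sqsubseteq w\). Then \(w':=w-c^1\in\ker A\) satisfies \(0\le w'_i/w_i\le1\) on \(\supp(w)\), so \(w'\sqsubseteq w\). Moreover \(\supp(w')\subsetneq\supp(w)\), since the minimizing index is killed. Any conformal circuit of \(w'\) is also conformal to \(w\), because \(\sqsubseteq\) is transitive on sign patterns with nested supports. Induction on \(\abs{\supp(w)}\) therefore yields \(w=c^1+\dots+c^q\) with each \(c^\ell\sqsubseteq w\) and \(q\le\abs{\supp(w)}\).

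\emph{Step 3: the bound \(q\le\dim\ker A\).} This is the main obstacle, because naive peeling only gives \(q\le\abs{\supp(w)}\). The plan is to apply the conformal Carathéodory theorem to the pointed cone \(K\). The conformal circuits of \(w\) are exactly the extreme rays of \(K\): an element of \(K\) with minimal support generates an extreme ray, and conversely. So \(w\), lying in \(K\), is a nonnegative combination of extreme rays of \(K\). The cone \(K\) lives in the linear space \(\ker A\cap\{v:v_i=0,\ i\notin\supp(w)\}\), whose dimension is at most \(\min\{\dim\ker A,\abs{\supp(w)}\}\). Carathéodory's theorem for cones then lets us select linearly independent extreme rays, and hence at most that many circuits. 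Scaling each chosen ray by its positive coefficient gives the circuits \(c^\ell\), and conformality \(c^\ell\sqsubseteq w\) holds because every element of \(K\) is conformal to \(w\). This simultaneously gives both bounds on \(q\).
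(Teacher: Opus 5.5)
Your proof is correct. The paper does not prove this lemma; it cites Rockafellar and M\"uller et al., and your Step~3 is essentially the standard argument behind that result: take the pointed cone $K$ given by $\ker A$ intersected with the closed orthant of $w$, identify its extreme rays with the circuits conformal to $w$, and apply conic Carath\'eodory in the subspace $\ker A\cap\{v: v_i=0,\ i\notin\supp(w)\}$, whose dimension is at most $\min\{\dim\ker A,\abs{\supp(w)}\}$.

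One simplification is available. Step~2 already writes $w$ as a positive combination of finitely many conformal circuits lying in that subspace. You can therefore apply conic Carath\'eodory directly to those vectors. That lets you skip the extreme-ray identification and Minkowski's theorem entirely; the identification is true, but you only asserted it.
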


To control the Lagrangian gradient uniformly, we introduce the following
circuit seminorm, which is invariant under shifts in
\(\operatorname{range}(A^\transpose)\).

\begin{definition}[Circuit seminorm]\label{def:circuitnorm}
If \(\ker A\neq\{0\}\), define
\begin{equation}\label{eq:circuitnorm}
 \norm{g}_{\cir(A)}
 :=\max_{[c]\in\cC(A)}
 \frac{\abs{\ip{c}{g}}}
      {\min_{j\in\supp(c)}\abs{c_j}}.
\end{equation}
If \(\ker A=\{0\}\), set
\(\norm{g}_{\cir(A)}:=0\).
\end{definition}

\begin{proposition}\label{prop:circuit-seminorm}
For \(g\in\R^n\), the quantity
\(\norm{g}_{\cir(A)}\) is well defined.  The map
\(g\mapsto\norm{g}_{\cir(A)}\) is a seminorm, and there exists
\(\chi_A<\infty\), depending only on \(A\), such that
\(\norm{g}_{\cir(A)}\leq\chi_A\norm{g}_\infty\).  Moreover,
\(\norm{g+A^\transpose\mu}_{\cir(A)}=\norm{g}_{\cir(A)}\) for every
\(\mu\in\R^m\).
\end{proposition}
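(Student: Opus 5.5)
The plan is to check the four claims in order: well-definedness, the seminorm axioms, the bound by $\norm{\cdot}_\infty$, and invariance under $\range(A^\transpose)$. Each reduces to an elementary property of circuits. The case $\ker A=\{0\}$ is trivial, since then the quantity is identically $0$; so assume $\ker A\neq\{0\}$.

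\emph{Well-definedness.} This has two parts.
\begin{enumerate}
\item The ratio $\abs{\ip{c}{g}}/\min_{j\in\supp(c)}\abs{c_j}$ is invariant under $c\mapsto sc$ with $s\neq0$, because both numerator and denominator scale by $\abs{s}$. Hence it depends only on the class $[c]$. The denominator is positive because $c\neq0$.
\item The set $\cC(A)$ is finite and nonempty. Nonempty: any nonzero vector of $\ker A$ of minimal support is a circuit. Finite: for a circuit $c$ with support $J$, the space $\{w\in\ker A:\supp(w)\subseteq J\}$ is one-dimensional. Indeed, if $w$ in this space were not a multiple of $c$, some combination $w-sc$ would vanish at an index of $J$ while remaining nonzero. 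That contradicts the minimality of $J$. So each support determines a unique class, and there are at most $2^n$ supports.
\end{enumerate}
The maximum is therefore over a finite nonempty set and is well defined.

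\emph{Seminorm.} For each fixed $[c]$, the map $g\mapsto\abs{\ip{c}{g}}/\min_j\abs{c_j}$ is a scaled absolute value of a linear functional, so it is absolutely homogeneous and subadditive. A finite maximum of seminorms is again a seminorm.

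\emph{Bound by $\norm{\cdot}_\infty$.} By Hölder, $\abs{\ip{c}{g}}\le\norm{c}_1\norm{g}_\infty$. So the claim holds with
\[
\chi_A:=\max_{[c]\in\cC(A)}\frac{\norm{c}_1}{\min_{j\in\supp(c)}\abs{c_j}}.
\]
This is finite because $\cC(A)$ is finite and each ratio is scale-invariant. It depends only on $A$. A cruder bound is $\chi_A\le n\cdot\max_{[c]}\max_j\abs{c_j}/\min_j\abs{c_j}$.

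\emph{Shift invariance.} Every circuit lies in $\ker A$, so $\ip{c}{A^\transpose\mu}=\ip{Ac}{\mu}=0$. Hence $\ip{c}{g+A^\transpose\mu}=\ip{c}{g}$ for every circuit, and the maximum is unchanged.

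The only step that is not a one-line computation is the finiteness of $\cC(A)$, namely the uniqueness up to scaling of a circuit with given support. I would prove it by the support-reduction argument in item 2 above.
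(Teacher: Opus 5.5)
Your proposal is correct and follows the paper's proof essentially step by step. The finiteness of $\cC(A)$ comes from the same support-uniqueness argument (your $w-sc$ is the paper's $v_ju-u_jv$), the bound uses the same constant $\chi_A=\max_{[c]}\norm{c}_1/\min_{j\in\supp(c)}\abs{c_j}$ via H\"older, shift invariance comes from $Ac=0$, and your remarks on the nonemptiness of $\cC(A)$ and positivity of the denominator are small details the paper leaves implicit.
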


\begin{proof}
The claim is immediate when \(\ker A=\{0\}\).  Suppose that
\(\ker A\neq\{0\}\).  Each circuit class is determined by its support.  Indeed,
let \(u\) and \(v\) be $A$-circuits with the same support \(J\), and fix
\(j\in J\).  If \(u\) and \(v\) are not scalar multiples, then
\(v_ju-u_jv\neq0\) and
\[
 A(v_ju-u_jv)=0,\qquad [v_ju-u_jv]_j=0.
\]
Thus \(v_ju-u_jv\) is a nonzero vector in \(\ker A\) with
\(\supp(v_ju-u_jv)\subseteq J\setminus\{j\}\), contradicting the support
minimality of \(u\).  Hence two circuits with the same support are scalar multiples and
belong to the same circuit class.  Since \([n]\) has only finitely many
subsets, \(\cC(A)\) is finite.

The quotient in \eqref{eq:circuitnorm} and the ratio below are invariant under
rescaling \(c\mapsto\tau c\), where \(\tau\neq0\).  Set
\[
 \chi_A:=
 \max_{[c]\in\cC(A)}
 \frac{\norm{c}_1}{\min_{j\in\supp(c)}\abs{c_j}}
\]
which is finite because \(\cC(A)\) is finite.  H\"older's inequality and
\eqref{eq:circuitnorm} give the bound.  Nonnegativity, absolute
homogeneity, and the triangle inequality follow from
\eqref{eq:circuitnorm} directly, so this quantity is a seminorm.  Finally,
\(Ac=0\) gives
\(\ip{c}{g+A^\transpose\mu}=\ip{c}{g}\) for every $A$-circuit \(c\), which proves
the shift invariance.
\end{proof}

The following lemma provides a uniform bound on the Lagrangian
gradient, even as the sequence approaches the boundary.

\begin{lemma}\label{lem:circuit}
For \(\phi(x)=\sum_{i=1}^n h_i(x_i)\) with \(h_i''>0\) on \(I_i\),
let \(x,x^+\in X^\circ\), \(g\in\R^n\), and \(\alpha>0\) satisfy
\[
 \nabla\phi(x^+)-\nabla\phi(x)
 +\alpha(g+A^\transpose\lambda)=0
\]
for some \(\lambda\in\R^m\).
Then
\begin{equation}\label{eq:circuit-bound}
 \norm{g+A^\transpose\lambda}_\infty\leq\norm{g}_{\cir(A)}.
\end{equation}

\end{lemma}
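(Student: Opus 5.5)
Set \(w:=x^+-x\in\ker A\) and \(v:=g+A^\transpose\lambda\). The plan is to exploit the fact that the separable mirror step makes \(v\) and \(w\) coordinatewise of opposite sign. Then a conformal circuit through any chosen coordinate isolates that coordinate's value of \(v\) inside \(\ip{c}{v}=\ip{c}{g}\).

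First, since \(\phi\) is separable with \(h_i''>0\), each \(h_i'\) is strictly increasing on \(I_i\). The optimality relation reads \(h_i'(x_i^+)-h_i'(x_i)=-\alpha v_i\) for every \(i\), so \(v_i\) has the sign opposite to \(w_i\). Moreover, \(v_i=0\) iff \(w_i=0\). Hence \(\supp(v)=\supp(w)\) and \(v_iw_i<0\) on this common support. If \(w=0\), then \(v=0\) and \eqref{eq:circuit-bound} is trivial; the same holds if \(\ker A=\{0\}\), since then \(w=0\).

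Now let \(w\neq0\) and fix \(j\) with \(v_j\neq0\), so \(j\in\supp(w)\). By Lemma~\ref{lem:conformal}, \(w=\sum_\ell c^\ell\) with circuits \(c^\ell\sqsubseteq w\). Because \(w_j\neq0\), at least one \(c:=c^\ell\) has \(c_j\neq0\). Conformality gives \(\supp(c)\subseteq\supp(w)\) and \(c_iw_i\geq0\), so \(c_iv_i\leq0\) for every \(i\), with strict inequality on \(\supp(c)\). Therefore \(\abs{\ip{c}{v}}=\sum_{i\in\supp(c)}\abs{c_i}\abs{v_i}\geq\abs{c_j}\abs{v_j}\geq\min_{i\in\supp(c)}\abs{c_i}\,\abs{v_j}\). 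Since \(Ac=0\), we have \(\ip{c}{v}=\ip{c}{g}\) (the shift invariance in Proposition~\ref{prop:circuit-seminorm}). Dividing gives \(\abs{v_j}\leq\abs{\ip{c}{g}}/\min_{i\in\supp(c)}\abs{c_i}\leq\norm{g}_{\cir(A)}\), and taking the maximum over \(j\) yields \eqref{eq:circuit-bound}.

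The only delicate step is the sign coherence. A sum \(\ip{c}{v}\) can exceed a single term in absolute value only when no cancellation occurs. This requires all products \(c_iv_i\) to share a sign, which is exactly what conformality combined with the strict monotonicity of each \(h_i'\) provides. This monotonicity is where separability is essential; a nonseparable \(\nabla\phi\) would not give the coordinatewise sign relation.
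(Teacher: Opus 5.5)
Your proposal is correct and follows essentially the same route as the paper: strict monotonicity of each \(h_i'\) gives sign coherence between the displacement and \(g+A^\transpose\lambda\), a conformal circuit through the chosen coordinate turns \(\ip{c}{g+A^\transpose\lambda}=\ip{c}{g}\) into a cancellation-free sum, and isolating the \(j\)-th term gives the bound. The only difference is cosmetic: you decompose \(x^+-x\), whose signs are opposite to those of \(g+A^\transpose\lambda\), whereas the paper decomposes \(x-x^+\), whose signs agree with them.
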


\begin{proof}
For every \(i\in[n]\),
\[
 \alpha\bigl(g_i+[A^\transpose\lambda]_i\bigr)
 =h_i'(x_i)-h_i'(x_i^+).
\]
Since \(h_i'\) is strictly increasing, \(x_i-x_i^+\) and
\(g_i+[A^\transpose\lambda]_i\) have the same sign and vanish
simultaneously.  If \(\ker A=\{0\}\), then \(x=x^+\) and hence
\(g+A^\transpose\lambda=0\).  Hence
\eqref{eq:circuit-bound} follows from Definition~\ref{def:circuitnorm}.

Suppose \(\ker A\neq\{0\}\).
Fix \(j\) with \(g_j+[A^\transpose\lambda]_j\neq0\), so
\(j\in\supp(x-x^+)\).  Lemma~\ref{lem:conformal} applied to \(x-x^+\)
yields an $A$-circuit \(c\sqsubseteq x-x^+\) with \(j\in\supp(c)\).  Thus
\(c_i(g_i+[A^\transpose\lambda]_i)
=\abs{c_i}\abs{g_i+[A^\transpose\lambda]_i}\) for every \(i\).
Since \(Ac=0\),
\[
 \sum_i\abs{c_i}\abs{g_i+[A^\transpose\lambda]_i}
 =\ip{c}{g+A^\transpose\lambda}
 =\ip{c}{g}.
\]
Therefore
\[
 \abs{g_j+[A^\transpose\lambda]_j}
 \leq\frac{\abs{\ip{c}{g}}}{\abs{c_j}}
 \leq\frac{\abs{\ip{c}{g}}}
          {\min_{\ell\in\supp(c)}\abs{c_\ell}}
 \leq\norm{g}_{\cir(A)}.
\]
Taking the maximum over \(j\) proves \eqref{eq:circuit-bound}.
\end{proof}

\subsubsection{\texorpdfstring{Sufficient decrease and relative-error bounds}{Sufficient decrease and relative-error bounds}}
\label{sec:sufficient-decrease-relative-error}

 Based on the preceding conformal circuit decomposition, we now establish the one-step estimates required by the finite-length argument under KL property. We first give a lower bound on the symmetric Bregman divergence in terms of the reparameterized displacement.

\begin{proposition}
\label{prop:coercivity}
For every \(x,y\in C\),
\begin{equation}\label{eq:coercivity}
 D_\phi(y,x)+D_\phi(x,y)
 \geq\norm{S(y)-S(x)}^2.
\end{equation}
\end{proposition}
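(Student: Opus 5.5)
Since \(\phi\) is separable, I will reduce \eqref{eq:coercivity} to a one-dimensional inequality and then apply Cauchy--Schwarz. The symmetric Bregman divergence is
\[
 D_\phi(y,x)+D_\phi(x,y)=\ip{\nabla\phi(y)-\nabla\phi(x)}{y-x}
 =\sum_{i=1}^n\bigl(h_i'(y_i)-h_i'(x_i)\bigr)(y_i-x_i),
\]
and \(\norm{S(y)-S(x)}^2=\sum_i(S_i(y_i)-S_i(x_i))^2\). It therefore suffices to show, for each \(i\) and all \(s,t\in I_i\),
\[
 \bigl(h_i'(t)-h_i'(s)\bigr)(t-s)\geq\bigl(S_i(t)-S_i(s)\bigr)^2 .
\]

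For the scalar inequality I may assume \(s<t\); the case \(s=t\) is trivial, and the expression is symmetric in \(s,t\). The interval \([s,t]\) lies in the open interval \(I_i\), where \(h_i\) is \(C^3\) and \(h_i''>0\). The fundamental theorem of calculus then gives
\[
 h_i'(t)-h_i'(s)=\int_s^t h_i''(u)\dd u,
 \qquad
 S_i(t)-S_i(s)=\int_s^t\sqrt{h_i''(u)}\dd u .
\]
Applying Cauchy--Schwarz to the second integral, written as \(\int_s^t 1\cdot\sqrt{h_i''(u)}\dd u\), yields
\[
 \Bigl(\int_s^t\sqrt{h_i''(u)}\dd u\Bigr)^2\leq (t-s)\int_s^t h_i''(u)\dd u
 =(t-s)\bigl(h_i'(t)-h_i'(s)\bigr).
\]
This is exactly the scalar inequality. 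Summing it over \(i\) gives \eqref{eq:coercivity}.

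There is no real obstacle in this argument. The only point to check is that all points stay in the interior \(C\), so the integrals are finite and \(S\) is the original (not the extended) map; this holds because \(x,y\in C\) and each \(I_i\) is an interval. Equality holds exactly when \(h_i''\) is constant on each segment, which confirms that the bound is the natural one for the flattened metric \eqref{eq:flatten}.
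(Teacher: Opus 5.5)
Your proposal is correct and follows essentially the same route as the paper: reduce to one coordinate, write the symmetric Bregman divergence as \(\bigl(\int h''\bigr)\bigl(\int 1\bigr)\), apply Cauchy--Schwarz to obtain \(\bigl(\int\sqrt{h''}\bigr)^2=(S(v)-S(u))^2\), and sum over coordinates. Your explicit check that everything stays in the open interval \(I_i\), so that \(S\) is the unextended map and all integrals are finite, is a nice touch that the paper leaves implicit.
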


\begin{proof}
For one coordinate, let
\(D_h(v,u):=h(v)-h(u)-h'(u)(v-u)\).  Then
\begin{align*}
 D_h(v,u)+D_h(u,v)
 &=\left(\int_u^v h''(t)\,\dd t\right)
   \left(\int_u^v1\,\dd t\right)\geq\left(\int_u^v\sqrt{h''(t)}\,\dd t\right)^2
 =(S(v)-S(u))^2.
\end{align*}
Summation over all the coordinates proves \eqref{eq:coercivity}.
\end{proof}

The following lemma transfers the circuit estimate to the
reparameterized variables.

\begin{lemma}\label{lem:scalar-comparison}
Fix \(i\in[n]\) with \(\kappa_i<\infty\).  Suppose
\(x_i,x_i^+\in\mathcal I_i\) and \(s_i\in\R\) satisfy
\begin{equation}\label{eq:scalar-dual}
 h_i'(x_i^+)=h_i'(x_i)-s_i.
\end{equation}
Then
\begin{equation}\label{eq:scalar-comparison}
 e^{-\kappa_i\abs{s_i}/2}
 \frac{\abs{s_i}}{\sqrt{h_i''(x_i)}}
 \leq \abs{S_i(x_i^+)-S_i(x_i)}
 \leq e^{\kappa_i\abs{s_i}/2}
 \frac{\abs{s_i}}{\sqrt{h_i''(x_i)}}.
\end{equation}
\end{lemma}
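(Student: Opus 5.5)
We will work in the dual variable \(y:=h_i'(x_i)\) along the segment between \(x_i\) and \(x_i^+\). Since \(h_i'\) is a \(C^2\) increasing diffeomorphism of \(I_i\) onto its range, write \(x_i(\sigma):=(h_i')^{-1}(h_i'(x_i)-\sigma)\) for \(\sigma\) between \(0\) and \(s_i\). Then \(x_i(0)=x_i\), \(x_i(s_i)=x_i^+\), and \(\frac{\dd}{\dd\sigma}x_i(\sigma)=-1/h_i''(x_i(\sigma))\). Both endpoints lie in the interval \(\mathcal I_i\), and \(\mathcal I_i\) is an interval because \(X^\circ\) is convex and \(\mathcal I_i\) is its coordinate projection. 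Monotonicity therefore keeps the whole path \(x_i(\sigma)\) inside \(\mathcal I_i\), so the bound \(\kappa_i\) is available along it.

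\textbf{Step 1: express \(S_i(x_i^+)-S_i(x_i)\) as an integral in \(\sigma\).} By the chain rule,
\[
 \frac{\dd}{\dd\sigma}S_i(x_i(\sigma))=\sqrt{h_i''(x_i(\sigma))}\cdot\Bigl(-\frac1{h_i''(x_i(\sigma))}\Bigr)=-\frac{1}{\sqrt{h_i''(x_i(\sigma))}} .
\]
Hence \(S_i(x_i^+)-S_i(x_i)=-\int_0^{s_i}q(\sigma)^{1/2}\,\dd\sigma\), where \(q(\sigma):=1/h_i''(x_i(\sigma))>0\). The integrand has constant sign, so \(\abs{S_i(x_i^+)-S_i(x_i)}=\bigl|\int_0^{s_i}\sqrt{q(\sigma)}\,\dd\sigma\bigr|\).

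\textbf{Step 2: Grönwall-type control of \(q\).} Differentiating,
\[
 q'(\sigma)=\Bigl(\frac1{h_i''}\Bigr)'(x_i(\sigma))\cdot\frac{\dd x_i}{\dd\sigma}=-\Bigl(\frac1{h_i''}\Bigr)'(x_i(\sigma))\,q(\sigma),
\]
so \(\abs{(\log q)'(\sigma)}\le\kappa_i\). Integrating from \(0\) gives
\[
 q(0)e^{-\kappa_i\abs{\sigma}}\le q(\sigma)\le q(0)e^{\kappa_i\abs{\sigma}},
\]
and therefore \(\sqrt{q(\sigma)}\) lies between \(q(0)^{1/2}e^{\mp\kappa_i\abs{\sigma}/2}\) with \(q(0)^{1/2}=1/\sqrt{h_i''(x_i)}\).

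\textbf{Step 3: integrate.} Bounding \(e^{\pm\kappa_i\abs{\sigma}/2}\) crudely by its value at \(\abs{\sigma}=\abs{s_i}\) over the interval of length \(\abs{s_i}\) yields both inequalities in \eqref{eq:scalar-comparison}. One could instead obtain the sharper \((e^{\kappa_i\abs{s_i}/2}-1)/(\kappa_i/2)\) form, but the crude bound suffices. The case \(s_i=0\) is trivial. The only delicate point is the one noted at the start: \(\kappa_i\) is a supremum over \(\mathcal I_i\) rather than \(I_i\), so the path must stay in \(\mathcal I_i\), which follows from convexity of \(X^\circ\). Apart from that, the argument is a direct ODE comparison, and I expect no real obstacle.
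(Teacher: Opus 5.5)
Your proof is correct and is the paper's argument after the change of variables \(\sigma=h_i'(x_i)-h_i'(u)\). The paper bounds \(\abs{\log(h_i''(u)/h_i''(x_i))}\le\kappa_i\abs{h_i'(u)-h_i'(x_i)}\le\kappa_i\abs{s_i}\) and integrates \(\sqrt{h_i''(u)}=h_i''(u)/\sqrt{h_i''(u)}\) in \(u\), which is exactly your Gr\"onwall bound on \(q\) followed by integration in \(\sigma\); your explicit check that \(\mathcal I_i\) is an interval (as a coordinate projection of the convex set \(X^\circ\)), so that \(\kappa_i\) applies along the whole path, fills in a point the paper uses only implicitly.
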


\begin{proof}
For \(u\) between \(x_i\) and \(x_i^+\), \eqref{eq:kappa} gives
\[
 \abs{\bigl(\log h_i''\bigr)'(u)}
 =\frac{\abs{h_i'''(u)}}{h_i''(u)}
 =h_i''(u)\frac{\abs{h_i'''(u)}}{(h_i''(u))^2}
 \leq\kappa_i h_i''(u).
\]
Integrating from \(x_i\) to \(u\), {and noting that $h'_i$ is increasing,} yields
\[
 \abs{\log\frac{h_i''(u)}{h_i''(x_i)}}
 \leq \kappa_i\abs{h_i'(u)-h_i'(x_i)}
 \leq \kappa_i\abs{s_i}.
\]
Hence
\[
 \frac{e^{-\kappa_i\abs{s_i}/2}}{\sqrt{h_i''(x_i)}}
 \leq\frac1{\sqrt{h_i''(u)}}
 \leq\frac{e^{\kappa_i\abs{s_i}/2}}{\sqrt{h_i''(x_i)}}.
\]
Since \(h_i''(u)>0\), multiplying the preceding bound by \(h_i''(u)\) gives
\[
 \frac{e^{-\kappa_i\abs{s_i}/2}}{\sqrt{h_i''(x_i)}}h_i''(u)
 \leq\frac{h_i''(u)}{\sqrt{h_i''(u)}}
 =\sqrt{h_i''(u)}
 \leq\frac{e^{\kappa_i\abs{s_i}/2}}{\sqrt{h_i''(x_i)}}h_i''(u).
\]
Integrating this inequality in \(u\) from \(x_i\)
to \(x_i^+\), and using \(S_i'=\sqrt{h_i''}\) and \((h_i')'=h_i''\), gives
\[
 \frac{e^{-\kappa_i\abs{s_i}/2}}{\sqrt{h_i''(x_i)}}
 \abs{h_i'(x_i^+)-h_i'(x_i)}
 \leq\abs{S_i(x_i^+)-S_i(x_i)}
 \leq\frac{e^{\kappa_i\abs{s_i}/2}}{\sqrt{h_i''(x_i)}}
 \abs{h_i'(x_i^+)-h_i'(x_i)}.
\]
Using \eqref{eq:scalar-dual} proves \eqref{eq:scalar-comparison}.
\end{proof}

\begin{proposition}
\label{prop:relative-error}
Suppose that the metric-flattening reparameterization
\(S\) in~\eqref{eq:S} admits a definable boundary extension.  Let
\(f\in C^1\) on a neighborhood of \(X\), define \(E\) by \eqref{eq:E}, and set
\(\Gamma:=\max_{x\in X}\norm{\nabla f(x)}_{\cir(A)}<\infty\).  Assume
\(\kappa<\infty\), fix \(\bar\alpha<\infty\), and let
\(x^+:=T_\alpha(x)\) for \(x\in X^\circ\) and
\(0<\alpha\leq\bar\alpha\).  With \(z:=S(x)\) and \(z^+:=S(x^+)\), we have
\begin{equation}\label{eq:relative-error}
 \dist(0,\partial E(z))
 \leq\frac{\exp(\kappa\bar\alpha\Gamma/2)}{\alpha}\norm{z^+-z}.
\end{equation}
The bound is uniform and independent of the distance to the boundary.
\end{proposition}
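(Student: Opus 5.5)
The plan is to use the mirror-step multiplier as a particular choice of \(\mu\) in the slope formula \eqref{eq:slope}, and then bound the resulting vector coordinatewise by Lemma~\ref{lem:scalar-comparison}.

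\emph{Setting up.} Since \(x\in X^\circ\), the point \(z\) lies in \(\cN^\circ\), so \eqref{eq:slope} applies. Let \(\lambda:=\lambda^+\) be the multiplier in \eqref{eq:intro-dual-increment}, so that
\[
\nabla\phi(x^+)-\nabla\phi(x)+\alpha\bigl(\nabla f(x)+A^\transpose\lambda\bigr)=0 .
\]
Choosing \(\mu=\lambda\) in \eqref{eq:slope} and using \eqref{eq:DSinverse} gives
\[
\dist(0,\partial E(z))\le \norm{D(S^{-1})(z)^\transpose v},
\qquad v:=\nabla f(x)+A^\transpose\lambda .
\]
The \(i\)th entry of \(D(S^{-1})(z)^\transpose v\) is \(v_i/\sqrt{h_i''(x_i)}\).

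\emph{Uniform bound on the dual step.} Apply Lemma~\ref{lem:circuit} with \(g=\nabla f(x)\). This yields
\[
\norm{v}_\infty\le\norm{\nabla f(x)}_{\cir(A)}\le\Gamma .
\]
Here \(\Gamma\) is finite because \(X\) is compact, \(\nabla f\) is continuous, and Proposition~\ref{prop:circuit-seminorm} gives \(\norm{\cdot}_{\cir(A)}\le\chi_A\norm{\cdot}_\infty\).

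\emph{Per-coordinate comparison.} Set \(s_i:=\alpha v_i\). Then \(h_i'(x_i^+)=h_i'(x_i)-s_i\), with \(x_i,x_i^+\in\mathcal I_i\), and \(\abs{s_i}\le\bar\alpha\Gamma\). The left inequality of Lemma~\ref{lem:scalar-comparison}, together with \(\kappa_i\le\kappa\), gives
\[
\frac{\alpha\abs{v_i}}{\sqrt{h_i''(x_i)}}\le e^{\kappa\bar\alpha\Gamma/2}\,\abs{z_i^+-z_i}.
\]
Dividing by \(\alpha\), squaring, and summing over \(i\) gives \(\norm{D(S^{-1})(z)^\transpose v}\le \alpha^{-1}e^{\kappa\bar\alpha\Gamma/2}\norm{z^+-z}\), which is \eqref{eq:relative-error}. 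Every constant here depends only on \(\kappa\), \(\bar\alpha\), and \(\Gamma\), so the bound is uniform and does not depend on the distance to the boundary.

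\emph{Main obstacle.} The crux is the uniform bound on \(\abs{s_i}\). Without it, the exponential factor in Lemma~\ref{lem:scalar-comparison} could blow up as the multipliers diverge near the boundary. Lemma~\ref{lem:circuit} removes this difficulty. What remains is a routine check that the subdifferential formula \eqref{eq:subdiff} is valid at interior points \(z\in\cN^\circ\), where \(\cN\) locally coincides with \(\cN^\circ\).
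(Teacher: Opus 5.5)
Your proposal is correct and follows the paper's proof essentially step for step. You take the mirror-step multiplier as \(\mu\) in \eqref{eq:slope}, bound \(\norm{\nabla f(x)+A^\transpose\lambda}_\infty\le\Gamma\) via Lemma~\ref{lem:circuit}, and apply the left inequality of Lemma~\ref{lem:scalar-comparison} coordinatewise with \(s_i=\alpha(\nabla f(x)+A^\transpose\lambda)_i\), exactly as the paper does.
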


\begin{proof}
Note that \(z\in\cN^\circ\) and
\(x=S^{-1}(z)\).
Let \(\lambda\) be the mirror-step multiplier and write
\(r:=\nabla f(x)+A^\transpose\lambda\).  Lemma~\ref{lem:circuit} and
\eqref{eq:intro-dual-increment} give \(\abs{r_i}\leq\Gamma\) and
\(h_i'(x_i^+)=h_i'(x_i)-\alpha r_i\), respectively.
Applying Lemma~\ref{lem:scalar-comparison} with \(s_i=\alpha r_i\) gives
\[
 \frac{\abs{r_i}}{\sqrt{h_i''(x_i)}}
 \leq\frac1\alpha e^{\kappa_i\alpha\abs{r_i}/2}
       \abs{z_i^+-z_i}
 \leq\frac{\exp(\kappa\bar\alpha\Gamma/2)}{\alpha}\abs{z_i^+-z_i}.
\]
Taking \(\mu=\lambda\) in \eqref{eq:slope} and using
\eqref{eq:DSinverse} yields
\[
 \dist(0,\partial E(z))
 \leq\norm{D(S^{-1})(z)r}
 \leq\frac{\exp(\kappa\bar\alpha\Gamma/2)}{\alpha}\norm{z^+-z}.
\]
This proves \eqref{eq:relative-error}.
\end{proof}

\subsubsection{\texorpdfstring{Finite length of the reparameterized sequence}{Finite length of the reparameterized sequence}}
\label{sec:finite-length-reparameterized}

The following lemma follows the standard KL finite-length argument, with the difference that the reference point of the relative error is $z_{k}$ rather than $z_{k+1}$, and that the stepsizes may vary and need not be bounded away from zero.

\begin{lemma}
\label{lem:KL-length}
Let \(K\subset\R^d\) be compact and
\(E:\R^d\to\R\cup\{+\infty\}\) be proper,
lower-semicontinuous, continuous on \(K=\dom E\), and KL.
Suppose \(z_k\in K\), \(0<\alpha_k\leq\bar\alpha<\infty\),
\(\sum_{k=0}^\infty\alpha_k=\infty\), and there
are \(c_1,c_2>0\) such that
\begin{align}
 E(z_k)-E(z_{k+1})
 &\geq\frac{c_1}{\alpha_k}
       \norm{z_{k+1}-z_k}^2,
 \label{eq:abstract-decrease}\\
 \dist(0,\partial E(z_k))
 &\leq\frac{c_2}{\alpha_k}
       \norm{z_{k+1}-z_k}.
 \label{eq:abstract-relative}
\end{align}
Then
\(\sum_{k=0}^\infty\norm{z_{k+1}-z_k}<\infty\).
Consequently, \(z_k\) converges to a point \(z_\star\in K\) and
\(0\in\partial E(z_\star)\).
\end{lemma}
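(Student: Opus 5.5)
The plan is to run the standard Attouch--Bolte KL finite-length argument, adapted to two features: the relative error \eqref{eq:abstract-relative} is evaluated at \(z_k\) rather than \(z_{k+1}\), and the stepsizes \(\alpha_k\) are only nonsummable. First, \eqref{eq:abstract-decrease} shows that \(E(z_k)\) is nonincreasing. Since \(E\) is continuous on the compact set \(K\), the values are bounded below and converge to some \(E_\infty\).

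Next, let \(\Omega\) be the set of cluster points of \(\{z_k\}\); it is nonempty and compact, and \(E\equiv E_\infty\) on \(\Omega\) by continuity on \(K\). To see that \(\Omega\) consists of critical points, first sum \eqref{eq:abstract-decrease} to get \(\sum_k\norm{z_{k+1}-z_k}^2/\alpha_k<\infty\). Combined with \eqref{eq:abstract-relative}, this gives
\[
\sum_k\alpha_k\dist(0,\partial E(z_k))^2\leq c_2^2\sum_k\norm{z_{k+1}-z_k}^2/\alpha_k<\infty .
\]
Because \(\sum\alpha_k=\infty\), we obtain \(\liminf_k\dist(0,\partial E(z_k))=0\). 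This does not yet give criticality of every cluster point, and I avoid needing it by using the uniformized KL property on \(\Omega\) (as in Bolte--Sabach--Teboulle). That property requires only \(E\equiv E_\infty\) on \(\Omega\) and a uniform desingularizer \(\vartheta\) valid on a neighborhood \(\{z:\dist(z,\Omega)<\varepsilon,\ E_\infty<E(z)<E_\infty+\eta\}\). Since \(E\) is KL at every point of \(\dom E\) in the definable case, criticality of \(\Omega\) is not needed as a hypothesis. If \(E(z_k)=E_\infty\) for some \(k\), then \eqref{eq:abstract-decrease} forces \(z_{k+1}=z_k=\cdots\) and we are done, so assume \(E(z_k)>E_\infty\). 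For large \(k\), \(\dist(z_k,\Omega)<\varepsilon\) (a standard fact for bounded sequences) and \(E(z_k)<E_\infty+\eta\).

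Now the key estimate. Set \(\Delta_k:=\vartheta(E(z_k)-E_\infty)-\vartheta(E(z_{k+1})-E_\infty)\) and \(d_k:=\norm{z_{k+1}-z_k}\). By concavity, then KL at \(z_k\), then \eqref{eq:abstract-relative} and \eqref{eq:abstract-decrease},
\[
\Delta_k\geq\vartheta'(E(z_k)-E_\infty)\bigl(E(z_k)-E(z_{k+1})\bigr)
\geq\frac{E(z_k)-E(z_{k+1})}{\dist(0,\partial E(z_k))}
\geq\frac{c_1d_k^2/\alpha_k}{c_2d_k/\alpha_k}=\frac{c_1}{c_2}d_k .
\]
When \(d_k=0\) the inequality is trivial. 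The two factors of \(\alpha_k\) cancel exactly because both inequalities are anchored at \(z_k\). Summing telescopes to \(\sum_k d_k\leq(c_2/c_1)\vartheta(E(z_{k_0})-E_\infty)<\infty\).

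Finally, finite length makes \(\{z_k\}\) Cauchy, so \(z_k\to z_\star\in K\), and \(E(z_k)\to E(z_\star)\) by continuity on \(K\). Since \(\liminf_k\dist(0,\partial E(z_k))=0\), choose a subsequence with \(v_k\in\partial E(z_k)\) and \(v_k\to0\). Closedness of the limiting subdifferential (which needs convergence of \(E\)-values, available here) gives \(0\in\partial E(z_\star)\).

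The main obstacle is the subtle step of justifying the uniform KL neighborhood and the entry of the tail into it without first knowing that cluster points are critical. This is resolved by applying KL at all points of \(\Omega\), which is the definable setting. It is also worth verifying that \(\partial E(z_k)\neq\varnothing\) whenever the KL inequality is invoked; this is implied by \eqref{eq:abstract-relative} giving a finite distance.
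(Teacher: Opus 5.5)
Your proof is correct, but it takes a different route from the paper's.

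\textbf{The paper's route.} It first uses \(\sum_k\alpha_k\dist(0,\partial E(z_k))^2<\infty\) and \(\sum_k\alpha_k=\infty\) to extract a subsequence \(z_{k_j}\to z_\star\) with \(\dist(0,\partial E(z_{k_j}))\to0\). From this it concludes \(0\in\partial E(z_\star)\) \emph{before} invoking KL. It then applies the KL inequality only at this single critical point. A trapping argument (the first exit index \(N\) from \(\mathbb B(z_\star,2\rho)\) leads to a contradiction) shows the tail stays in the KL neighborhood.

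\textbf{Your route.} You invoke the uniformized KL property on the whole cluster set \(\Omega\). The estimate \(\Delta_k\geq(c_1/c_2)d_k\) then holds for all large \(k\) directly from \(\dist(z_k,\Omega)\to0\). This removes the trapping step and lets you postpone criticality of the limit to the very end. The cost is an extra lemma and the need for the KL inequality at every point of \(\Omega\), not just at one critical point. The paper's version uses KL only where its definition applies, namely at a point with \(\partial E(\bar z)\neq\varnothing\).

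\textbf{One justification to fix.} You write that ``\(E\) is KL at every point of \(\dom E\) in the definable case.'' This is not the right reason. The lemma does not assume definability, and both the paper's definition and the definable KL theorem only give KL at points with \(\partial E(\bar z)\neq\varnothing\). What actually legitimizes your uniformization is the standard remark about noncritical points. At any \(\bar z\in\dom E\) with \(0\notin\partial E(\bar z)\), including the case \(\partial E(\bar z)=\varnothing\), closedness of the limiting subdifferential under \(E\)-attentive convergence gives \(\delta>0\) with \(\dist(0,\partial E(z))\geq\delta\) for \(z\) near \(\bar z\) and \(\abs{E(z)-E(\bar z)}\) small. So the KL inequality holds there with \(\vartheta(s)=s/\delta\). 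At critical points of \(\Omega\) the hypothesis applies directly. With this remark, the finite-subcover construction of a common \(\varepsilon,\eta,\vartheta\) goes through, since \(E\equiv E_\infty\) on \(\Omega\) by continuity on \(K\). The rest of your argument is sound.
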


\begin{proof}
Since \(E\) is continuous on the compact set \(K\), it is bounded below.  ~\eqref{eq:abstract-decrease} gives
\(E(z_k)\downarrow E_\infty\in\R\) and
\(\sum_k\norm{z_{k+1}-z_k}^2/\alpha_k<\infty\).  Since
\(\alpha_k\leq\bar\alpha\), we also have
\(\sum_k\norm{z_{k+1}-z_k}^2<\infty\) and
\(\norm{z_{k+1}-z_k}\to0\).  Combining the two estimates gives
\begin{equation}\label{eq:slope-descent}
 E(z_k)-E(z_{k+1})
 \geq\frac{c_1}{c_2^2}\alpha_k
 \dist(0,\partial E(z_k))^2.
\end{equation}
Thus
\(\sum_k\alpha_k\dist(0,\partial E(z_k))^2<\infty\).
Since \(\sum_k\alpha_k=\infty\), this implies
\(\liminf_k\dist(0,\partial E(z_k))=0\).  Thus there is a
subsequence \(\{z_{k_j}\}\) in $K$ such that \(z_{k_j}\to z_\star\in K\) and
\(\dist(0,\partial E(z_{k_j}))\to0\).
~\eqref{eq:abstract-relative} ensures that
\(\partial E(z_k)\neq\varnothing\) for every \(k\).  Since each limiting
subdifferential is closed in finite dimensions, choose
\(v_{k_j}\in\partial E(z_{k_j})\) with
\[
 \norm{v_{k_j}}=\dist(0,\partial E(z_{k_j})).
\]
Thus \(z_{k_j}\to z_\star\), \(v_{k_j}\to0\), and continuity on \(K\)
gives \(E(z_{k_j})\to E(z_\star)=E_\infty\).  Sequential closedness of
the limiting subdifferential yields \(0\in\partial E(z_\star)\).

If \(E(z_k)=E_\infty\) for some \(k\), monotonicity gives
\(E(z_\ell)=E_\infty\) for every \(\ell\ge k\).  Equation~\eqref{eq:abstract-decrease}
then gives \(z_{\ell+1}=z_\ell\), and \eqref{eq:abstract-relative} gives
\(0\in\partial E(z_k)\).  Thus the conclusion of the lemma already holds.
Now assume \(E(z_k)>E_\infty\) for every \(k\). Apply the KL inequality at \(z_\star\), with neighborhood \(U\), width
\(\eta\), and desingularizing function \(\vartheta\).  Whenever
\(z_k\in U\) and \(0<E(z_k)-E_\infty<\eta\), we have
\begin{equation}\label{eq:KL-slope-step}
 1
 \leq\vartheta'(E(z_k)-E_\infty)\dist(0,\partial E(z_k))
 \leq\frac{c_2}{\alpha_k}
       \vartheta'(E(z_k)-E_\infty)\norm{z_{k+1}-z_k}.
\end{equation}
Concavity of \(\vartheta\), \eqref{eq:abstract-decrease}, and
\eqref{eq:KL-slope-step} then give
\begin{align}
 \vartheta(E(z_k)-E_\infty)-\vartheta(E(z_{k+1})-E_\infty)
 &\geq\vartheta'(E(z_k)-E_\infty)
       (E(z_k)-E(z_{k+1}))\notag\\
 &\geq\frac{c_1}{\alpha_k}
       \vartheta'(E(z_k)-E_\infty)\norm{z_{k+1}-z_k}^2
 \geq\frac{c_1}{c_2}
       \norm{z_{k+1}-z_k}.\label{eq:KL-tail-descent}
\end{align}
Choose \(\rho>0\) such that
\(\mathbb B(z_\star,2\rho)\cap K\subset U\).  For sufficiently large \(j\),
\[
 \norm{z_{k_j}-z_\star}<\rho,\qquad
 0<E(z_{k_j})-E_\infty<\eta,\qquad
 \frac{c_2}{c_1}
 \vartheta(E(z_{k_j})-E_\infty)<\rho.
\]
Suppose that \(N>k_j\) is the first index for which
\(z_N\notin\mathbb B(z_\star,2\rho)\).  Monotonicity gives
\(E(z_k)-E_\infty<\eta\) for \(k=k_j,\ldots,N-1\), so \eqref{eq:KL-tail-descent} is valid for
\(k=k_j,\ldots,N-1\).  Therefore
\(\sum_{k=k_j}^{N-1}\norm{z_{k+1}-z_k}
\leq(c_2/c_1)\vartheta(E(z_{k_j})-E_\infty)<\rho\), which implies
\(\norm{z_N-z_\star}\leq\norm{z_{k_j}-z_\star}
+\sum_{k=k_j}^{N-1}\norm{z_{k+1}-z_k}<2\rho\), and we get a contradiction.
Thus the tail remains in the KL neighborhood.
For every \(k\geq k_j\), summing \eqref{eq:KL-tail-descent} gives
\[
 \sum_{\ell=k}^{\infty}\norm{z_{\ell+1}-z_\ell}
 \leq\frac{c_2}{c_1}
       \vartheta(E(z_k)-E_\infty).
\]
Thus \(\sum_{k=0}^\infty\norm{z_{k+1}-z_k}<\infty\), so \(z_k\) is
Cauchy.  Since
\(z_\star\) is a cluster point, the sequence converges to \(z_\star\).
\end{proof}

We are now ready to prove Theorem~\ref{thm:main}.

\begin{proof}[Proof of Theorem~\ref{thm:main}]
The boundary-extension condition makes
\(X=S^{-1}(\cN)\) compact.  Proposition~\ref{prop:mirror-step} therefore
ensures that every mirror step is well defined and belongs to \(X^\circ\).
For \(z_k:=S(x_k)\),
\(z_k\in\cN=\dom E\) and \(E(z_k)=f(x_k)\). By definition, \(E\) is proper and lower semicontinuous, continuous on
\(\cN\), and KL by definability.  For every \(k\),
Lemma~\ref{lem:bregman-decrease} and Proposition~\ref{prop:coercivity} give
\[
 E(z_k)-E(z_{k+1})
 \geq\frac{1-\alpha_kL}{\alpha_k}
       \norm{z_{k+1}-z_k}^2
 \geq\frac{1-\bar\alpha L}{\alpha_k}
       \norm{z_{k+1}-z_k}^2.
\]
Thus \eqref{eq:abstract-decrease} holds with \(c_1=1-\bar\alpha L\), while
Proposition~\ref{prop:relative-error} gives \eqref{eq:abstract-relative} with
\(c_2=\exp(\kappa\bar\alpha\Gamma/2)\).
Lemma~\ref{lem:KL-length} then gives \eqref{eq:main-length},
\(z_k\to z_\star\), and \(0\in\partial E(z_\star)\).
Continuity of \(S^{-1}\) and \(E|_{\cN}\) gives
\(x_k\to x_\star\) and \(f(x_k)\to f(x_\star)\).  Moreover,
\(E(z_k)-E(z_{k+1})\geq0\) for \(\bar\alpha L<1\), so
\(f(x_{k+1})\leq f(x_k)\) for every \(k\).  Since \(\{x_k\}\) is convergent,
\cite[Proposition~5.3]{dingtoh2025interior} implies that \(x_\star\) is a KKT
point of \eqref{eq:problem}.
\end{proof}

\section{\texorpdfstring{Applications}{Applications}}
\label{sec:applications}

In this section, we apply Theorem~\ref{thm:main} to mirror descent with
Shannon entropy, Fermi--Dirac entropy, and power kernels, and use
Corollary~\ref{cor:rates} to derive rates for the original sequence
\(\{x_k\}\).

\subsection{Shannon entropy}

Consider \(C:=\R_{++}^n\) and \(X:=\{x\in\R_+^n:Ax=b\}\), with
\(X^\circ=X\cap\R_{++}^n\neq\varnothing\), and assume that \(X\) is
compact.  The Shannon entropy kernel is
\(\phi(x):=\sum_{i=1}^n x_i(\log x_i-1)\) for \(x\in\R_{++}^n\).
The metric-flattening reparameterization and its inverse are
\(S(x)=2\sqrt{x}\) and \(S^{-1}(z)=z\odot z/4\), where the square root is applied coordinatewise.

\begin{corollary}
\label{cor:entropy-affine}
Let \(f\) be \(C^1\) on a neighborhood of \(X\), suppose that \(f|_X\) is
definable and \(f\) is \(L\)-smooth relative to \(\phi\) on
\(X^\circ\).  Starting from \(x_0\in X^\circ\), let
\(x_{k+1}:=T_{\alpha_k}(x_k)\), where
\(0<\alpha_k\leq\bar\alpha<\infty\), \(\bar\alpha L<1\), and
\(\sum_{k=0}^\infty\alpha_k=\infty\).  Then
\(\sum_{k=0}^\infty
\norm{2\sqrt{x_{k+1}}-2\sqrt{x_k}}<\infty\), and \(x_k\) converges to
a KKT point \(x_\star\) of \(f\) on \(X\).
\end{corollary}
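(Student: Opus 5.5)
The plan is to obtain Corollary~\ref{cor:entropy-affine} as a direct specialization of Theorem~\ref{thm:main}, so the work reduces to checking that theorem's hypotheses for the Shannon kernel \(h(t)=t(\log t-1)\) on \(I_i=(0,\infty)\).

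\textbf{Step 1: kernel regularity.} Each \(h\) is \(C^\infty\) on \((0,\infty)\) with \(h''(t)=1/t>0\). Also \(h'(t)=\log t\to-\infty\) as \(t\downarrow0\), so \(h\) is a one-dimensional Legendre kernel on \((0,\infty)\).

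\textbf{Step 2: the curvature constant.} Since \(1/h''(t)=t\), we have \((1/h'')'(t)=1\) for every \(t\). Hence \(\kappa_i=1\) for every \(i\), and \(\kappa=1<\infty\) regardless of the sets \(\mathcal I_i\). Equivalently, \(\abs{h'''}/(h'')^2=t^{-2}\cdot t^2=1\).

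\textbf{Step 3: definable boundary extension.} Proposition~\ref{prop:kernel-boundary-extensions}(i) already establishes this for compact \(X=\{x\geq0:Ax=b\}\). It gives the reparameterization \(S(x)=2\sqrt x\), the inverse \(S^{-1}(z)=z\odot z/4\), and \(\cN=\{z\geq0:A(z\odot z/4)=b\}\), all semialgebraic and hence definable in \(\R_{\mathrm{an},\exp}\). The normalization \(S(0)=0\) differs from \eqref{eq:S} (base point \(\bar x_i\)) only by an additive constant. This does not affect the differences \(S(x_{k+1})-S(x_k)\), nor the structure of \(E\) up to translation.

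\textbf{Step 4: common o-minimal structure.} The only point requiring care is the phrase ``\(f|_X\) is definable,'' which must be read in the same o-minimal structure as \(S\) and \(\cN\). Since those objects are semialgebraic, they are definable in every o-minimal structure, so any structure in which \(f|_X\) is definable serves as the common one. I expect this bookkeeping to be the only subtle point.

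\textbf{Step 5: conclusion.} The remaining hypotheses (\(f\in C^1\) near \(X\), relative smoothness, \(\bar\alpha L<1\), nonsummable stepsizes, \(x_0\in X^\circ\)) are assumed verbatim. Theorem~\ref{thm:main} therefore yields \(\sum_k\norm{2\sqrt{x_{k+1}}-2\sqrt{x_k}}<\infty\), convergence \(x_k\to x_\star=S^{-1}(z_\star)\), and KKT stationarity of \(x_\star\).
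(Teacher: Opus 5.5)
Your proposal is correct and follows essentially the same route as the paper: it reads $\kappa=1$ off $\abs{h'''}/(h'')^2=1$, takes the semialgebraic boundary extension from Proposition~\ref{prop:kernel-boundary-extensions}(i), and then applies Theorem~\ref{thm:main}. Your extra checks (the Legendre property, the additive normalization of $S$, and the common o-minimal structure) are harmless bookkeeping; the paper instead notes that compactness of $X$ makes $\Gamma$ finite, and that point is handled inside the theorem's proof anyway.
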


\begin{proof}
Proposition~\ref{prop:kernel-boundary-extensions} gives the
semialgebraic boundary extension
\[
 \cN=\cl S(X^\circ)
 =\{z\in\R_+^n:A(z\odot z/4)=b\}.
\]
For \(h(t)=t(\log t-1)\) with $t\in (0,\infty)$,
\(\abs{h'''(t)}/(h''(t))^2=1\), so \(\kappa=1\).
Compactness of \(X\) makes the circuit quantity \(\Gamma\) finite.
Theorem~\ref{thm:main} then yields
finite length and KKT convergence.
\end{proof}

\begin{corollary}
\label{cor:entropy-affine-rates}
Under the assumptions of Corollary~\ref{cor:entropy-affine}, suppose that
\(E\) has KL exponent \(\theta\in[0,1)\) at the limit
\(z_\star=2\sqrt{x_\star}\).  Let
\(I_0:=\{i:x_{\star,i}=0\}\) and \(I_+:=[n]\setminus I_0\).
If \(0\leq\theta<1/2\), then \(x_k=x_\star\) for all sufficiently large
\(k\).  If \(1/2\leq\theta<1\), take \(k_0\) and \(\gamma_0\) as in
Corollary~\ref{cor:rates}, let
\(\tau_k:=\sum_{j=k_0}^{k-1}\alpha_j\), and define
\[
 \rho_k:=
 \begin{cases}
  \exp(-\gamma_0\tau_k/2),&\theta=1/2,\\[5pt]
  \tau_k^{-(1-\theta)/(2\theta-1)},&1/2<\theta<1,
 \end{cases}
\]
Then
\[
 \norm{(x_k-x_\star)_{I_+}}=O(\rho_k),
 \qquad
 \norm{(x_k)_{I_0}}=O(\rho_k^2),
 \qquad
 \norm{x_k-x_\star}=O(\rho_k).
\]
\end{corollary}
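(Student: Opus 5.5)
The plan is to apply Corollary~\ref{cor:rates} to the reparameterized sequence \(z_k=S(x_k)=2\sqrt{x_k}\) and then pull the rates back through the explicit inverse \(S^{-1}(z)=z\odot z/4\). Corollary~\ref{cor:entropy-affine} already shows that the hypotheses of Theorem~\ref{thm:main} hold, with \(\kappa=1\) and a semialgebraic boundary extension. Hence Corollary~\ref{cor:rates} applies directly once the KL exponent \(\theta\) of \(E\) at \(z_\star\) is assumed.

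\emph{Finite termination.} For \(0\le\theta<1/2\), Corollary~\ref{cor:rates}(i) gives \(x_k=x_\star\) for all large \(k\). This is exactly the first claim, so nothing more is needed.

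\emph{Rates in \(z\).} For \(1/2\le\theta<1\), Corollary~\ref{cor:rates}(ii)--(iii) gives \(\norm{z_k-z_\star}\le M\rho_k\) for all large \(k\), with \(\rho_k\) as defined in the statement. We then transfer this coordinatewise using
\[
x_{k,i}-x_{\star,i}=\tfrac14\bigl(z_{k,i}^2-z_{\star,i}^2\bigr)=\tfrac14(z_{k,i}-z_{\star,i})(z_{k,i}+z_{\star,i}).
\]
The two index sets behave differently.

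\emph{Interior coordinates \(i\in I_+\).} The factor \(z_{k,i}+z_{\star,i}\) is bounded, because \(\cN\) is compact and so \(\norm{z}_\infty\le R\) on \(\cN\). This gives \(\abs{x_{k,i}-x_{\star,i}}\le (R/2)\abs{z_{k,i}-z_{\star,i}}\), and summing over \(I_+\) yields \(\norm{(x_k-x_\star)_{I_+}}=O(\rho_k)\).

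\emph{Boundary coordinates \(i\in I_0\).} Here \(z_{\star,i}=0\), so \(x_{k,i}=z_{k,i}^2/4=\abs{z_{k,i}-z_{\star,i}}^2/4\). Summing the squares, \(\norm{(x_k)_{I_0}}\le \norm{(x_k)_{I_0}}_1\le \tfrac14\norm{z_k-z_\star}^2\le \tfrac{M^2}{4}\rho_k^2\).

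\emph{Whole vector.} We combine the two blocks with \(\norm{x_k-x_\star}\le\norm{(x_k-x_\star)_{I_+}}+\norm{(x_k)_{I_0}}\). Since \(\rho_k\to0\), we have \(\rho_k^2\le\rho_k\) eventually, and so the total is \(O(\rho_k)\).

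There is no substantial obstacle, since the corollary is a direct transfer of the rates in \(z\). The only points needing care are two. First, \(\rho_k\to0\) must be justified: this holds because \(\tau_k\to\infty\) from the nonsummability of \(\sum_k\alpha_k\). Second, we need the uniform bound on \(z\) over \(\cN\), which compactness of \(\cN\) supplies.
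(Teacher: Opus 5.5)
Your proposal is correct and follows the paper's proof: both apply Corollary~\ref{cor:rates} to get \(\norm{z_k-z_\star}=O(\rho_k)\) and pull this back coordinatewise through \(x_i=z_i^2/4\). On \(I_0\), \(z_{\star,i}=0\) gives the squared rate. The only difference is cosmetic: you bound the factor \(z_{k,i}+z_{\star,i}\) using compactness of \(\cN\), whereas the paper writes \(x_{k,i}-x_{\star,i}=z_{\star,i}(z_{k,i}-z_{\star,i})/2+(z_{k,i}-z_{\star,i})^2/4\).
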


\begin{proof}
Corollary~\ref{cor:rates} gives
\(\norm{z_k-z_\star}=O(\rho_k)\).  Note that
\(x_{k,i}-x_{\star,i}
=z_{\star,i}(z_{k,i}-z_{\star,i})/2
+(z_{k,i}-z_{\star,i})^2/4\).
If \(i\in I_+\), then \(z_{\star,i}>0\) and
\(\abs{x_{k,i}-x_{\star,i}}=O(\rho_k)\).  If \(i\in I_0\), then
\(z_{\star,i}=0\) and
\(x_{k,i}=(z_{k,i}-z_{\star,i})^2/4=O(\rho_k^2)\).
Since both coordinate sets are finite, these estimates give the claimed
bounds.
\end{proof}

The coordinatewise rates for the original iterates can differ according to
whether a coordinate converges to the boundary or remains in the interior.
In this case, a coordinate converging to the boundary has the
faster \(O(\rho_k^2)\) rate, whereas a coordinate converging to a positive
limit has rate \(O(\rho_k)\).
When \(\alpha_k\equiv\alpha>0\) and \(1/2<\theta<1\), these rates become
\(O(k^{-2(1-\theta)/(2\theta-1)})\) at the boundary and
\(O(k^{-(1-\theta)/(2\theta-1)})\) in the interior.

\subsection{Fermi--Dirac entropy}

Consider \(C:=(0,1)^n\) and
\(X:=\{x\in[0,1]^n:Ax=b\}\), with
\(X^\circ=X\cap(0,1)^n\neq\varnothing\).  The Fermi--Dirac entropy kernel is
\(\phi(x):=\sum_{i=1}^n
[x_i\log x_i+(1-x_i)\log(1-x_i)]\) for \(x\in(0,1)^n\).
Its metric-flattening reparameterization and inverse are
\(S(x)=2\arcsin\sqrt{x}\) and \(S^{-1}(z)=\sin^2(z/2)\), respectively,
with all scalar operations applied coordinatewise.

\begin{corollary}
\label{cor:fermi-affine}
Let \(f\) be \(C^1\) on a neighborhood of \(X\), suppose that \(f|_X\) is
definable and  \(f\) is \(L\)-smooth relative to \(\phi\) on
\(X^\circ\).  Starting from \(x_0\in X^\circ\), let
\(x_{k+1}:=T_{\alpha_k}(x_k)\), where
\(0<\alpha_k\leq\bar\alpha<\infty\), \(\bar\alpha L<1\), and
\(\sum_{k=0}^\infty\alpha_k=\infty\).  Then
\(\sum_{k=0}^\infty\norm{S(x_{k+1})-S(x_k)}<\infty\), and \(x_k\)
converges to a KKT point \(x_\star\) of \(f\) on \(X\).
\end{corollary}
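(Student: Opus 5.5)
The plan is to mirror the proof of Corollary~\ref{cor:entropy-affine} and reduce everything to Theorem~\ref{thm:main}. There are two hypotheses of that theorem specific to the kernel: the definable boundary extension of \(S\), and finiteness of the curvature constant \(\kappa\) in~\eqref{eq:kappa}. The remaining hypotheses are already in the statement: \(f\) is \(C^1\) near \(X\), \(f|_X\) is definable, \(f\) is \(L\)-smooth relative to \(\phi\), and \(0<\alpha_k\le\bar\alpha\) with \(\bar\alpha L<1\) and \(\sum_k\alpha_k=\infty\).

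\emph{Boundary extension.} I would invoke Proposition~\ref{prop:kernel-boundary-extensions}(ii) directly. It gives \(S(t)=2\arcsin\sqrt t\) and \(S^{-1}(\zeta)=\sin^2(\zeta/2)\), both definable in \(\R_{\mathrm{an},\exp}\), and
\[
\cN=\{z\in[0,\pi]^n:A\sin^2(z/2)=b\}.
\]
Compactness of \(X\) is automatic here, since \(X\subseteq[0,1]^n\) is closed. Hence Proposition~\ref{prop:mirror-step} applies without an extra assumption. We also need \(f|_X\) and \(S^{-1}\) definable in a common structure; I would take \(\R_{\mathrm{an},\exp}\) and read the hypothesis ``\(f|_X\) definable'' as referring to that structure.

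\emph{Curvature bound.} This is the only genuine computation. With \(h''(t)=1/(t(1-t))\) we get \(1/h''(t)=t-t^2\), so
\[
\bigl(1/h''\bigr)'(t)=1-2t,
\qquad
\sup_{t\in(0,1)}\abs{1-2t}=1 .
\]
Therefore \(\kappa_i\le1\) for every \(i\), and \(\kappa\le 1<\infty\).

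\emph{Finite \(\Gamma\).} The constant \(\Gamma=\max_{x\in X}\norm{\nabla f(x)}_{\cir(A)}\) is finite. This follows from Proposition~\ref{prop:circuit-seminorm}, since \(\nabla f\) is continuous on the compact set \(X\).

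Theorem~\ref{thm:main} then yields \(\sum_k\norm{S(x_{k+1})-S(x_k)}<\infty\) and \(x_k\to x_\star\), a KKT point. There is no real obstacle; the only points needing care are verifying \(\kappa<\infty\) (done above) and the shared o-minimal structure.
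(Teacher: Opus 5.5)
Your proposal is correct and follows the paper's proof essentially step for step. Proposition~\ref{prop:kernel-boundary-extensions}(ii) supplies the definable boundary extension, the identity \((1/h'')'(t)=1-2t\) gives \(\kappa\le 1<\infty\), compactness of \(X\) makes \(\Gamma\) finite, and Theorem~\ref{thm:main} then delivers the conclusion; your \(\kappa\le1\) is in fact slightly more careful than the paper's \(\kappa=1\), since the supremum in~\eqref{eq:kappa} is taken over \(\mathcal I_i\subseteq(0,1)\).
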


\begin{proof}
Proposition~\ref{prop:kernel-boundary-extensions} gives the definable
boundary extension
\[
 \cN=\cl S(X^\circ)
 =\{z\in[0,\pi]^n:A\sin^2(z/2)=b\}.
\]
For the Fermi--Dirac kernel,
\(\abs{h'''(t)}/(h''(t))^2=\abs{1-2t}\leq1\) for $t\in(0,1)$, so $\kappa = 1$.  Compactness of \(X\)
makes the circuit quantity \(\Gamma\) finite.
The conclusion follows from Theorem~\ref{thm:main}.
\end{proof}

As in the Shannon-entropy case, the coordinatewise rates for the original
iterates differ according to whether the limiting coordinate lies on the
boundary or in the interior.

\begin{corollary}
\label{cor:fermi-affine-rates}
Under the assumptions of Corollary~\ref{cor:fermi-affine}, suppose that
\(E\) has KL exponent \(\theta\in[0,1)\) at the limit
\(z_\star=S(x_\star)\).  Let
\(I_0:=\{i:x_{\star,i}=0\}\), \(I_1:=\{i:x_{\star,i}=1\}\), and
\(I_{\mathrm{int}}:=[n]\setminus(I_0\cup I_1)\).  If
\(0\leq\theta<1/2\), then \(x_k=x_\star\) for all sufficiently large
\(k\).  If \(1/2\leq\theta<1\), take \(k_0\) and \(\gamma_0\) as in
Corollary~\ref{cor:rates}, let \(\tau_k:=\sum_{j=k_0}^{k-1}\alpha_j\), and define
\[
 \rho_k:=
 \begin{cases}
  \exp(-\gamma_0\tau_k/2),&\theta=1/2,\\[5pt]
  \tau_k^{-(1-\theta)/(2\theta-1)},&1/2<\theta<1,
 \end{cases}
\]
Then
\[
 \norm{(x_k-x_\star)_{I_{\mathrm{int}}}}=O(\rho_k),\qquad
 \norm{(x_k)_{I_0}}=O(\rho_k^2),\qquad
 \norm{(\1_n-x_k)_{I_1}}=O(\rho_k^2),
\]
and \(\norm{x_k-x_\star}=O(\rho_k)\).
\end{corollary}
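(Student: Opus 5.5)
The proof mirrors that of Corollary~\ref{cor:entropy-affine-rates}, using the Fermi--Dirac maps $S(t)=2\arcsin\sqrt t$ and $S^{-1}(\zeta)=\sin^2(\zeta/2)$.

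\textbf{Step 1: rates in $z$.} Corollary~\ref{cor:fermi-affine} places us under the assumptions of Theorem~\ref{thm:main}, so Corollary~\ref{cor:rates} applies with the given KL exponent. For $0\le\theta<1/2$ it yields $x_k=x_\star$ eventually. For $1/2\le\theta<1$ it yields $\norm{z_k-z_\star}=O(\rho_k)$ with the stated $\rho_k$. It then remains to transfer this bound coordinatewise through $x_{k,i}=\sin^2(z_{k,i}/2)$, where $z_{\star,i}\in[0,\pi]$.

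\textbf{Step 2: interior coordinates.} For $i\in I_{\mathrm{int}}$, the map $\zeta\mapsto\sin^2(\zeta/2)$ has derivative $\tfrac12\sin\zeta$, which is bounded by $1/2$. Hence this map is $\tfrac12$-Lipschitz on all of $\R$, and
\[
\abs{x_{k,i}-x_{\star,i}}\le\tfrac12\abs{z_{k,i}-z_{\star,i}}=O(\rho_k).
\]
This Lipschitz bound holds for every coordinate, so summing over the finitely many coordinates also gives $\norm{x_k-x_\star}=O(\rho_k)$ directly.

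\textbf{Step 3: boundary coordinates.} Here a quadratic bound replaces the linear one, because the derivative vanishes at the endpoints.
\begin{itemize}
\item For $i\in I_0$ we have $z_{\star,i}=0$. Using $\abs{\sin u}\le\abs{u}$,
\[
x_{k,i}=\sin^2(z_{k,i}/2)\le (z_{k,i}-z_{\star,i})^2/4=O(\rho_k^2).
\]
\item For $i\in I_1$ we have $z_{\star,i}=\pi$. Then
\[
1-x_{k,i}=\cos^2(z_{k,i}/2)=\sin^2\!\bigl((\pi-z_{k,i})/2\bigr)\le (z_{k,i}-z_{\star,i})^2/4=O(\rho_k^2).
\]
\end{itemize}

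\textbf{Identifying $I_0$ and $I_1$.} The only point needing care is that $x_{\star,i}=0$ exactly when $z_{\star,i}=0$, and $x_{\star,i}=1$ exactly when $z_{\star,i}=\pi$. This holds because $S_i$ extends to a homeomorphism $[0,1]\to[0,\pi]$ (Lemma~\ref{lem:boundary-extension-criterion} and Proposition~\ref{prop:kernel-boundary-extensions}(ii)), with $z_{\star}=S(x_\star)$. Since there are finitely many coordinates, these estimates combine into the claimed norm bounds. No step is a genuine obstacle; the work is the elementary trigonometric inequalities.
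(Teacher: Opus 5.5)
Your proposal is correct and follows the paper's proof: you obtain $\norm{S(x_k)-z_\star}=O(\rho_k)$ from Corollary~\ref{cor:rates} and transfer it through $x_{k,i}=\sin^2(z_{k,i}/2)$, with quadratic bounds at $z_{\star,i}\in\{0,\pi\}$. The differences are cosmetic. For the interior and global estimates you use the $\tfrac12$-Lipschitz bound of $\zeta\mapsto\sin^2(\zeta/2)$, where the paper uses the identity $x_{k,i}-x_{\star,i}=\sin\bigl(\tfrac{z_{k,i}+z_{\star,i}}{2}\bigr)\sin\bigl(\tfrac{z_{k,i}-z_{\star,i}}{2}\bigr)$. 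You also state explicitly the identification $x_{\star,i}\in\{0,1\}\iff z_{\star,i}\in\{0,\pi\}$, which the paper uses without comment.
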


\begin{proof}
Corollary~\ref{cor:rates} gives
\(\norm{S(x_k)-z_\star}=O(\rho_k)\).  For each \(i\in[n]\),
\[
 x_{k,i}-x_{\star,i}
 =\sin\!\left(\frac{z_{k,i}+z_{\star,i}}{2}\right)
  \sin\!\left(\frac{z_{k,i}-z_{\star,i}}{2}\right).
\]
If \(i\in I_{\mathrm{int}}\), this identity gives
\(\abs{x_{k,i}-x_{\star,i}}=O(\rho_k)\).  If \(i\in I_0\), then
\(z_{\star,i}=0\) and
\(x_{k,i}=\sin^2((z_{k,i}-z_{\star,i})/2)=O(\rho_k^2)\).  If
\(i\in I_1\), then \(z_{\star,i}=\pi\) and
\(1-x_{k,i}=\sin^2((z_{k,i}-z_{\star,i})/2)=O(\rho_k^2)\).
The coordinatewise estimates give the final bound.
\end{proof}

\subsection{Power kernels}

Consider \(C:=\R_{++}^n\) and \(X:=\{x\in\R_+^n:Ax=b\}\), with
\(X^\circ=X\cap\R_{++}^n\neq\varnothing\), and assume that \(X\) is
compact.  For \(1\leq p<2\), define \(h_p\), up to an affine term, by
\(h_p''(t)=t^{-p}\) for \(t\in (0,\infty)\), with the closed extension normalized
by \(h_p(0)=0\) and \(h_p(t)=+\infty\) for \(t<0\).  Since \(h_p''>0\) and
\(h_p'(t)\to-\infty\) as \(t\downarrow0\), \(h_p\) is a Legendre kernel on
\((0,\infty)\).  The metric-flattening reparameterization and its inverse are
\(S_p(t)=t^{1-p/2}/(1-p/2)\) for $t\in [0,\infty)$ and
\(S_p^{-1}(\zeta)=((1-p/2)\zeta)^{1/(1-p/2)}\) for $\zeta\in[0,\infty)$.  Moreover,
\(\abs{h_p'''(t)}/(h_p''(t))^2=pt^{p-1}\).
Because \(p<2\), both maps extend continuously to zero and are definable in
\(\R_{\mathrm{an},\exp}\).

For each \(i\in[n]\), fix
\(p_i\in[1,2)\), and set
\(\phi(x):=\sum_{i=1}^n h_{p_i}(x_i)\) and
\(S(x):=(S_{p_i}(x_i))_{i=1}^n\).

\begin{corollary}
\label{cor:power}
Let \(f\) be \(C^1\) on a neighborhood of \(X\), suppose that
\(f|_X\) is definable in \(\R_{\mathrm{an},\exp}\) and 
\(f\) is \(L\)-smooth relative to \(\phi\) on \(X^\circ\).  Starting
from \(x_0\in X^\circ\), let
\(x_{k+1}:=T_{\alpha_k}(x_k)\), where
\(0<\alpha_k\leq\bar\alpha<\infty\), \(\bar\alpha L<1\), and
\(\sum_{k=0}^\infty\alpha_k=\infty\).  Then
\(\sum_{k=0}^\infty\norm{S(x_{k+1})-S(x_k)}<\infty\), and \(x_k\)
converges to a KKT point of \(f\) on \(X\).
\end{corollary}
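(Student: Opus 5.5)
The plan is to verify the hypotheses of Theorem~\ref{thm:main} for the mixed power kernel \(\phi(x)=\sum_i h_{p_i}(x_i)\) and then invoke it directly, exactly as in Corollaries~\ref{cor:entropy-affine} and~\ref{cor:fermi-affine}. This requires four ingredients: each \(h_{p_i}\) is a \(C^3\) Legendre kernel on \((0,\infty)\) with \(h_{p_i}''>0\); \(\kappa<\infty\); the map \(S\) admits a definable boundary extension in \(\R_{\mathrm{an},\exp}\); and \(\Gamma\) is finite. The first ingredient is already recorded in the paragraph preceding the statement. For \(p_i=1\) the kernel is \(h_1(t)=t\log t\) up to an affine term, which is the Shannon case.

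\emph{Boundary extension.} I will apply Lemma~\ref{lem:boundary-extension-criterion} coordinatewise. Since \(X\) is compact and \(X=\cl X^\circ\), each set \(\cl\mathcal I_i\) is a compact subinterval of \([0,\infty)\). The map \(S_{p_i}(t)=t^{1-p_i/2}/(1-p_i/2)\) is continuous and increasing on \([0,\infty)\), so \(\cl\mathcal J_i=S_{p_i}(\cl\mathcal I_i)\) is compact. The inverse \(\zeta\mapsto((1-p_i/2)\zeta)^{1/(1-p_i/2)}\) is continuous on \([0,\infty)\). It is a real power of a linear function on \((0,\infty)\), and \(t^r=\exp(r\log t)\) is definable in \(\R_{\mathrm{an},\exp}\). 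Hence the inverse is definable there, including its value \(0\) at the endpoint. This is the same argument as Proposition~\ref{prop:kernel-boundary-extensions}(iii), now allowing the exponents \(p_i\) to vary across coordinates and allowing \(p_i=1\).

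\emph{Curvature bound.} From \(|h_p'''(t)|/(h_p''(t))^2=p\,t^{p-1}\) with \(p\ge1\), the quantity \(\kappa_i=\sup_{t\in\mathcal I_i}p_i t^{p_i-1}\) is bounded by \(p_i\max(1,R)^{p_i-1}\), where \(R:=\max_{x\in X}\norm{x}_\infty<\infty\) by compactness. This is the step I expect to need the most care. For \(p>1\), \(\kappa_i\) is not globally bounded on \((0,\infty)\), because \(t^{p-1}\) grows. Finiteness therefore depends on the definition of \(\kappa_i\) in~\eqref{eq:kappa}, which takes the supremum only over the bounded projection \(\mathcal I_i\), so compactness of \(X\) is essential here. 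Near \(t=0\) there is no issue, since \(t^{p-1}\) stays bounded there. This explains why the corollary restricts to \(p_i\ge1\): for \(p<1\), \(t^{p-1}\) blows up at the boundary.

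\emph{Conclusion.} Finiteness of \(\Gamma=\max_{x\in X}\norm{\nabla f(x)}_{\cir(A)}\) follows from Proposition~\ref{prop:circuit-seminorm}: \(\Gamma\le\chi_A\max_X\norm{\nabla f}_\infty\), which is finite by continuity of \(\nabla f\) on compact \(X\). Definability of \(f|_X\) in \(\R_{\mathrm{an},\exp}\) matches the structure used for \(S\). With these facts, Theorem~\ref{thm:main} gives \(\sum_k\norm{S(x_{k+1})-S(x_k)}<\infty\), convergence \(x_k\to x_\star=S^{-1}(z_\star)\), and that \(x_\star\) is a KKT point of \(f\) on \(X\).
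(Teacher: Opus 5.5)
Your proposal is correct and follows the paper's proof: you obtain the boundary extension from Lemma~\ref{lem:boundary-extension-criterion} via the explicit coordinate inverses, bound \(\kappa\) using \(\abs{h_{p_i}'''}/(h_{p_i}'')^2=p_it^{p_i-1}\) together with compactness of \(X\), get \(\Gamma<\infty\) from compactness, and then apply Theorem~\ref{thm:main}. One small aside: the restriction \(p_i\geq1\) is needed mainly because for \(p<1\) the derivative \(h_p'\) has a finite limit at \(0\), so \(h_p\) is not Legendre; the blow-up of \(t^{p-1}\) at \(0\) is only a secondary obstruction.
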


\begin{proof}
The coordinate inverses above and Lemma~\ref{lem:boundary-extension-criterion}
give the definable
boundary extension
\[
 \cN=\cl S(X^\circ)=\{z\geq0:AS^{-1}(z)=b\}.
\]
For \(h_{p_i}\),
\(\abs{h_{p_i}'''(t)}/(h_{p_i}''(t))^2=p_i t^{p_i-1}\) for $t\in (0,\infty)$, so compactness of
\(X\) gives \(\kappa<\infty\).  Compactness of \(X\) also makes the circuit
quantity \(\Gamma\) finite.
Theorem~\ref{thm:main} yields finite length and KKT convergence.
\end{proof}

\begin{corollary}
\label{cor:power-rates}
Under the assumptions of Corollary~\ref{cor:power}, suppose that
\(E\) has KL exponent \(\theta\in[0,1)\) at the limit
\(z_\star=S(x_\star)\).  Let
\(I_0:=\{i:x_{\star,i}=0\}\), \(I_+:=[n]\setminus I_0\), and
\(q_i:=2/(2-p_i)\geq2\).  If \(0\leq\theta<1/2\), then
\(x_k=x_\star\) for all sufficiently large \(k\).  If
\(1/2\leq\theta<1\), take \(k_0\) and \(\gamma_0\) as in
Corollary~\ref{cor:rates}, let \(\tau_k:=\sum_{j=k_0}^{k-1}\alpha_j\), and define
\[
 \rho_k:=
 \begin{cases}
  \exp(-\gamma_0\tau_k/2),&\theta=1/2,\\[5pt]
  \tau_k^{-(1-\theta)/(2\theta-1)},&1/2<\theta<1,
 \end{cases}
\]
Then
\[
 \norm{(x_k-x_\star)_{I_+}}=O(\rho_k),
 \qquad
 x_{k,i}=O(\rho_k^{q_i})\quad(i\in I_0),
 \qquad
 \norm{x_k-x_\star}=O(\rho_k).
\]
\end{corollary}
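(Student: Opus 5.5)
The plan follows the two previous rate corollaries: transfer the rate of the reparameterized sequence to each coordinate through the explicit inverse \(S_{p_i}^{-1}\).

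\textbf{Applying Corollary~\ref{cor:rates}.} Corollary~\ref{cor:power} already verifies the hypotheses of Theorem~\ref{thm:main}, so Corollary~\ref{cor:rates} applies directly. For \(0\le\theta<1/2\) it gives finite termination \(x_k=x_\star\). For \(1/2\le\theta<1\) it gives \(\norm{z_k-z_\star}=O(\rho_k)\), where \(z_k:=S(x_k)\). In particular \(\rho_k\to0\): when \(\theta=1/2\) this uses \(\tau_k\to\infty\), which follows from nonsummability of the stepsizes.

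\textbf{Boundary coordinates.} Take \(i\in I_0\). Then \(z_{\star,i}=S_{p_i}(0)=0\), and
\[
 x_{k,i}=S_{p_i}^{-1}(z_{k,i})=\bigl((1-p_i/2)z_{k,i}\bigr)^{q_i}
 =(1-p_i/2)^{q_i}\abs{z_{k,i}-z_{\star,i}}^{q_i}=O(\rho_k^{q_i}),
\]
using \(z_{k,i}\ge0\) and \(1/(1-p_i/2)=q_i\).

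\textbf{Interior coordinates.} Take \(i\in I_+\). Then \(z_{\star,i}>0\), and \(\zeta\mapsto(c\zeta)^{q_i}\) with \(c:=1-p_i/2\) is \(C^1\) on \([0,\infty)\) because \(q_i\ge2\). The sequence \(z_k\) is bounded, since it converges (or since \(\cN\) is compact). Hence this map is Lipschitz on a compact interval \([0,R]\) containing every \(z_{k,i}\) and \(z_{\star,i}\), with constant \(cq_i(cR)^{q_i-1}\). Therefore
\[
 \abs{x_{k,i}-x_{\star,i}}\le cq_i(cR)^{q_i-1}\abs{z_{k,i}-z_{\star,i}}=O(\rho_k).
\]
The same Lipschitz bound holds on \(I_0\) as well. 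This is the key point: \(q_i\ge2\) makes the map Lipschitz everywhere, including near zero, so there is no need to use \(\rho_k\le1\) for large \(k\) in order to compare \(\rho_k^{q_i}\) with \(\rho_k\).

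\textbf{Conclusion and main point of care.} Combining the finitely many coordinate estimates gives \(\norm{(x_k-x_\star)_{I_+}}=O(\rho_k)\) and \(\norm{x_k-x_\star}=O(\rho_k)\). I expect no real obstacle. The only step needing care is the Lipschitz (mean value) bound for \(S_{p_i}^{-1}\) on a bounded set. When \(p_i=1\) we have \(q_i=2\), the map is quadratic, and this reproduces the Shannon-type computation in Corollary~\ref{cor:entropy-affine-rates}.
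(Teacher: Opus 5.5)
Your proposal is correct and essentially matches the paper's argument. Like the paper, you invoke Corollary~\ref{cor:rates} to get $\norm{S(x_k)-z_\star}=O(\rho_k)$, then transfer this coordinatewise through $S_{p_i}^{-1}(\zeta)=((1-p_i/2)\zeta)^{q_i}$, using a mean-value/Lipschitz bound on $I_+$ and the exact power identity with $z_{\star,i}=0$ on $I_0$. Your remark that the same Lipschitz bound also gives $\norm{x_k-x_\star}=O(\rho_k)$ directly is a small tidy-up of the paper's brief ``this proves the bounds,'' not a different route.
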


\begin{proof}
Corollary~\ref{cor:rates} gives
\(\norm{S(x_k)-z_\star}=O(\rho_k)\).  For each \(i\in[n]\),
\[
 x_{k,i}-x_{\star,i}
 =\bigl((1-p_i/2)S_{p_i}(x_{k,i})\bigr)^{q_i}
  -\bigl((1-p_i/2)z_{\star,i}\bigr)^{q_i}.
\]
If \(i\in I_+\), then \(z_{\star,i}>0\), and the mean-value theorem gives
\(\abs{x_{k,i}-x_{\star,i}}=O(\rho_k)\).  If \(i\in I_0\), then
\(z_{\star,i}=0\) and
\(x_{k,i}=((1-p_i/2)(S_{p_i}(x_{k,i})-z_{\star,i}))^{q_i}
=O(\rho_k^{q_i})\).  This proves the bounds.
\end{proof}

\section{Conclusion}\label{sec:conclusion}

We prove that mirror descent converges to a KKT point for the nonconvex problem
without excluding boundary limits {under suitable assumptions.}  The result holds under verifiable
conditions that jointly couple the objective, the Legendre kernel, and the
feasible region.  The key ingredient to establish the convergence is a metric-flattening
reparameterization \(S\) that admits a definable boundary extension.
Applying the KL argument to the reparameterized objective yields convergence
of \(S(x_k)\).  Continuity of \(S^{-1}\) then recovers convergence to the KKT point of the
original sequence.  We further apply our general framework to some concrete examples: Shannon entropy,
Fermi--Dirac entropy, and power kernels.  Future work may include more general
constraints and nonseparable kernels, mirror descent with
stochastic gradients or subgradients, and broader Bregman-type methods, such as
Bregman alternating minimization and Bregman ADMM, including their inexact variants.

\medskip
\noindent\textbf{Acknowledgments.} The authors thank Gesualdo Scutari for
helpful comments. Over the course of this project, the authors used multiple
versions of OpenAI's ChatGPT, beginning with GPT-4o, for brainstorming,
exploratory discussions, and language editing.

\appendix

\section{Proof of Corollary~\ref{cor:rates}}\label{app:KL}

Along the mirror descent sequence,
\(r_k=E(z_k)-E(z_\star)\).
The KL-exponent condition and \eqref{eq:slope-descent} give an index \(k_0\)
and a constant \(\gamma_0>0\) such that
\begin{equation}\label{eq:rate-recurrence}
 r_k-r_{k+1}\geq \gamma_0\alpha_kr_k^{2\theta}
 \qquad(k\geq k_0).
\end{equation}
If \(r_k=0\) at some index, \eqref{eq:abstract-decrease} gives
\(z_\ell=z_k=z_\star\) and \(r_\ell=0\) for every \(\ell\geq k\).
Now assume \(r_k>0\) on the tail under consideration.
If \(0\leq\theta<1/2\), then
\[
 r_k^{1-2\theta}-r_{k+1}^{1-2\theta}
 =(1-2\theta)\int_{r_{k+1}}^{r_k}t^{-2\theta}\,\dd t
 \geq(1-2\theta)r_k^{-2\theta}(r_k-r_{k+1})
 \geq (1-2\theta)\gamma_0\alpha_k.
\]
If \(r_k\) remained positive, summation would contradict
\(\sum_k\alpha_k=\infty\).  Hence \(r_{k_1}=0\) for some
\(k_1\geq k_0\).  Since \(E(z_k)\downarrow E(z_\star)\),
~\eqref{eq:abstract-decrease} gives \(z_k=z_{k_1}\) for every
\(k\geq k_1\).  Passing to the limit yields \(z_{k_1}=z_\star\), and
therefore \(x_k=x_\star\) for every \(k\geq k_1\).

If \(\theta=1/2\), then
\[
 r_{k+1}\leq(1-\gamma_0\alpha_k)r_k.
\]
Since \(r_{k+1}>0\), this inequality gives
\(0<1-\gamma_0\alpha_k<1\).  Hence
\[
 r_k
 \leq r_{k_0}\prod_{j=k_0}^{k-1}(1-\gamma_0\alpha_j)
 \leq r_{k_0}\exp(-\gamma_0\tau_k).
\]

If \(1/2<\theta<1\), then \eqref{eq:rate-recurrence} gives
\[
 r_{k+1}^{-(2\theta-1)}-r_k^{-(2\theta-1)}
 =(2\theta-1)\int_{r_{k+1}}^{r_k}t^{-2\theta}\,\dd t
 \geq (2\theta-1)r_k^{-2\theta}(r_k-r_{k+1})
 \geq (2\theta-1)\gamma_0\alpha_k.
\]
Summation yields
\[
 r_k\leq
 \left(
 r_{k_0}^{-(2\theta-1)}
 +(2\theta-1)\gamma_0\tau_k
 \right)^{-1/(2\theta-1)}.
\]
The tail estimate in the proof of
Lemma~\ref{lem:KL-length} gives, for all sufficiently large \(k\),
\[
 \norm{z_k-z_\star}
 \leq\sum_{j\geq k}\norm{z_{j+1}-z_j}
 \leq M\vartheta(r_k)
 =Mr_k^{1-\theta},
\]
and substituting the preceding energy bounds yields the corresponding rates
for \(\norm{S(x_k)-z_\star}\) in Corollary~\ref{cor:rates}.

\section{Proof of Theorem~\ref{thm:flow}}\label{app:flow}

Fix \(x(0)\in X^\circ\).  By \cite[Proposition~3.1]{dingtoh2025interior},
\eqref{eq:intro-flow} admits a global solution that remains in \(X^\circ\).
Since the induced vector field is locally \(C^1\), this solution is unique.
For \(z(t)=S(x(t))\), \eqref{eq:subgradient-flow} gives
\(-\dot z(t)\in\partial E(z(t))\).
To show that this is the minimum-norm element, let \(\lambda(t)\) be the
multiplier in \eqref{eq:intro-flow}.  Since
\(\dot x(t)=D(S^{-1})(z(t))\dot z(t)\), the Hessian form of
\eqref{eq:intro-flow} and \eqref{eq:flatten} give
\[
\begin{aligned}
0
&=D(S^{-1})(z(t))^\transpose
  \bigl(\nabla^2\phi(x(t))\dot x(t)
        +\nabla f(x(t))+A^\transpose\lambda(t)\bigr)\\
&=D(S^{-1})(z(t))^\transpose\,
  \nabla^2\phi(S^{-1}(z(t)))\,D(S^{-1})(z(t))\dot z(t)
  +D(S^{-1})(z(t))^\transpose
   \bigl(\nabla f(x(t))+A^\transpose\lambda(t)\bigr)\\
&=\dot z(t)+D(S^{-1})(z(t))^\transpose
  \bigl(\nabla f(x(t))+A^\transpose\lambda(t)\bigr).
\end{aligned}
\]
Therefore,
\[
 -\dot z(t)
 =D(S^{-1})(z(t))^\transpose
  \bigl(\nabla f(x(t))+A^\transpose\lambda(t)\bigr).
\]
Now fix \(v\in\partial E(z(t))\).  By \eqref{eq:subdiff}, there exists
\(\eta\in\R^m\) such that
\[
 v=D(S^{-1})(z(t))^\transpose
   \bigl(\nabla f(x(t))+A^\transpose\eta\bigr).
\]
Subtracting the two equalities and setting
\(\mu=\eta-\lambda(t)\) yield
\[
 v+\dot z(t)=D(S^{-1})(z(t))^\transpose A^\transpose\mu,
 \qquad
 v=-\dot z(t)+D(S^{-1})(z(t))^\transpose A^\transpose\mu.
\]
Differentiating \(A S^{-1}(z(t))=b\) gives {\(A D(S^{-1})(z(t))\dot z(t)=0\) and hence}
\[
 \ip{\dot z(t)}{v+\dot z(t)}
 =\ip{A D(S^{-1})(z(t))\dot z(t)}{\mu}=0.
\]
Consequently,
\[
 \norm{v}^2
 =\norm{\dot z(t)}^2+\norm{v+\dot z(t)}^2
\]
Thus \(-\dot z(t)\) is the minimum-norm element of
\(\partial E(z(t))\).  By \eqref{eq:E},
\(\nabla(f\circ S^{-1})(z(t))\in\partial E(z(t))\).  Hence
\begin{align}\label{eq:flow-identities}
 \dist(0,\partial E(z(t)))&=\norm{\dot z(t)},
 \\
 \frac{\dd}{\dd t}E(z(t))
 &=\ip{\nabla(f\circ S^{-1})(z(t))}{\dot z(t)}
 =-\norm{\dot z(t)}^2.
\end{align}
Thus \(E(z(t))\downarrow E_\infty\in\R\) and
\(\int_0^\infty\norm{\dot z(t)}^2\,\dd t<\infty\).  Hence there exist
times \(t_j\to\infty\) such that \(\dot z(t_j)\to0\).  By compactness of
\(\cN\), after passing to a subsequence, \(z(t_j)\to z_\star\in\cN\).
Continuity of \(E\) on \(\cN\) and closedness of \(\partial E\) yield
\(E(z_\star)=E_\infty\) and \(0\in\partial E(z_\star)\).

If \(E(z(t_0))=E_\infty\) for some \(t_0<\infty\), monotonicity gives
\(E(z(t))=E_\infty\) for every \(t\geq t_0\), and
\eqref{eq:flow-identities} implies \(\dot z(t)=0\) on
\([t_0,\infty)\).  Now consider the case \(E(z(t))>E_\infty\) for every
\(t\geq0\).  Let \(U\), \(\eta\), and \(\vartheta\) be the neighborhood, width, and
desingularizing function in the KL inequality at \(z_\star\).  Whenever
\(z(t)\in U\) and \(0<E(z(t))-E_\infty<\eta\),
\eqref{eq:KL} and \eqref{eq:flow-identities} give
\begin{equation}\label{eq:flow-KL-length}
 -\frac{\dd}{\dd t}\vartheta(E(z(t))-E_\infty)
 =\vartheta'(E(z(t))-E_\infty)\norm{\dot z(t)}^2
 \geq\norm{\dot z(t)}.
\end{equation}
Choose \(\rho>0\) such that
\(\{z:\norm{z-z_\star}\leq2\rho\}\subset U\).  Since
\(z(t_j)\to z_\star\), \(E(z(t_j))\to E_\infty\), and
\(\vartheta(s)\to0\) as \(s\downarrow0\), fix \(j\) such that
\[
 \norm{z(t_j)-z_\star}<\rho,\qquad
 0<E(z(t_j))-E_\infty<\eta,\qquad
 \vartheta(E(z(t_j))-E_\infty)<\rho.
\]
Define \(T:=\inf\{t\geq t_j:\norm{z(t)-z_\star}\geq2\rho\}\), with
\(T=\infty\) if the set is empty.  For every
\(s\in[t_j,T)\), monotonicity and \eqref{eq:flow-KL-length} give
\[
 \int_{t_j}^{s}\norm{\dot z(t)}\,\dd t
 \leq
 \vartheta(E(z(t_j))-E_\infty)
 -\vartheta(E(z(s))-E_\infty)
 <\rho.
\]
Consequently,
\[
 \norm{z(s)-z_\star}
 \leq\norm{z(t_j)-z_\star}
     +\int_{t_j}^{s}\norm{\dot z(t)}\,\dd t
 <2\rho.
\]
If \(T<\infty\), letting \(s\uparrow T\) contradicts the definition of
\(T\).  Therefore \(T=\infty\).  Letting \(s\to\infty\) in the integral
estimate gives
\[
 \int_{t_j}^{\infty}\norm{\dot z(t)}\,\dd t
 \leq\vartheta(E(z(t_j))-E_\infty)<\infty,
\]
which proves \eqref{eq:flow-length}.  Hence \(z(t)\) converges.  Since
\(z(t_j)\to z_\star\), its limit is \(z_\star\).  Continuity of \(S^{-1}\)
then gives \(x(t)\to x_\star\).

It remains to prove that \(x_\star\) is a KKT point.  Integrating
\eqref{eq:intro-flow} and dividing by \(t>0\) give
\begin{equation}\label{eq:flow-averaged-dual}
 \frac{\nabla\phi(x(t))-\nabla\phi(x(0))}{t}
 +\frac1t\int_0^t\nabla f(x(s))\,\dd s
 \in\range(A^\transpose).
\end{equation}
Since \(x(t)\to x_\star\), the integral average converges to
\(\nabla f(x_\star)\). The constraint qualification \(C\cap L\neq\varnothing\) implies that
\begin{equation}\label{eq:flow-CQ}
 N_{\overline C}(x_\star)\cap\range(A^\transpose)=\{0\}.
\end{equation}
Indeed, choose \(\widehat x\in C\cap L\).  If
\(v=A^\transpose\mu\in N_{\overline C}(x_\star)\), then
\(\ip{v}{\widehat x-x_\star}=0\).  For all sufficiently small
\(\varepsilon>0\), normality applied to
\(\widehat x+\varepsilon v\in C\) gives
\(0\geq\ip{v}{\widehat x+\varepsilon v-x_\star}
=\varepsilon\norm{v}^2\), proving \eqref{eq:flow-CQ}.

If \(x_\star\in C\), the first term in
\eqref{eq:flow-averaged-dual} tends to zero, so
\(\nabla f(x_\star)\in\range(A^\transpose)\) and the KKT condition follows.
Suppose that \(x_\star\) lies on the boundary.  Essential
smoothness gives \(\norm{\nabla\phi(x(t))}\to\infty\).  For all sufficiently
large \(t\), let
\[
 \nu(t):=\frac{\nabla\phi(x(t))-\nabla\phi(x(0))}
                 {\norm{\nabla\phi(x(t))}},
 \qquad
 \gamma(t):=\frac{\norm{\nabla\phi(x(t))}}{t}.
\]
Every cluster point of \(\nu(t)\) is a unit vector in
\(N_{\overline C}(x_\star)\)
\cite[Lemma~4.1]{alvarez2004hessian}.  The function \(\gamma\) is bounded.
Otherwise, along some \(t_j\to\infty\), one would have
\(\gamma(t_j)\to\infty\) and \(\nu(t_j)\to v\).  Dividing
\eqref{eq:flow-averaged-dual} by \(\gamma(t_j)\) would give
\(v\in\range(A^\transpose)\), contradicting \eqref{eq:flow-CQ}.

Choose \(t_j\to\infty\).  After passing to a subsequence, let
\(\gamma(t_j)\to\gamma_\star\geq0\) and \(\nu(t_j)\to v\).  Then
\(\gamma_\star v\in N_{\overline C}(x_\star)\).  Passing to the limit in
\eqref{eq:flow-averaged-dual} gives
\(\gamma_\star v+\nabla f(x_\star)\in\range(A^\transpose)\).  Hence there
exists \(\lambda_\star\in\R^m\) such that
\(0\in\nabla f(x_\star)+A^\transpose\lambda_\star
+N_{\overline C}(x_\star)\), which is \eqref{eq:KKT}.

\bibliographystyle{plain}
\begingroup
\footnotesize
\let\plainbibitem\bibitem
\renewcommand{\bibitem}[1]{\plainbibitem{#1}}
\bibliography{references}
\endgroup

\end{document}